\newcommand*{\mailto}[1]{\href{mailto:#1}{\nolinkurl{#1}}}
\newcommand{\bbC}{{\mathbb{C}}}
\newcommand{\bbN}{{\mathbb{N}}}
\newcommand{\bbR}{{\mathbb{R}}}
\newcommand{\cB}{{\mathcal B}}
\newcommand{\cF}{{\mathcal F}}
\newcommand{\cM}{{\mathcal M}}
\newcommand{\cS}{{\mathcal S}}
\newcommand{\gM}{{\mathfrak{M}}}
\DeclareMathOperator{\supp}{supp}
\DeclareMathOperator{\dom}{dom}
\renewcommand{\Re}{\text{\rm Re}}
\newcommand{\loc}{\text{\rm{loc}}}
\newcommand{\no}{\notag}
\newcommand{\lb}{\label}
\newcommand{\f}{\frac}
\newcommand{\ol}{\overline}
\newcommand{\bi}{\bibitem}
\let\geq\geqslant
\let\leq\leqslant
\def\theequation{\@arabic\c@equation}
\numberwithin{equation}{section}
\newtheorem{theorem}{Theorem}[section]
\newtheorem{lemma}[theorem]{Lemma}
\newtheorem{corollary}[theorem]{Corollary}
\newtheorem{definition}[theorem]{Definition}
\newtheorem{example}[theorem]{Example}
\theoremstyle{remark}
\newtheorem{remark}[theorem]{Remark}
\begin{document}

\numberwithin{equation}{section}
\allowdisplaybreaks

\title[Positivity Preserving, Translation Invariant, Operators in 
$L^p(\bbR^n)^m$]{On Positivity Preserving, Translation Invariant, Operators in 
$L^p(\bbR^n)^m$} 
    
\author[F.\ Gesztesy]{Fritz Gesztesy}
\address{Department of Mathematics, 
Baylor University, One Bear Place \#97328,
Waco, TX 76798-7328, USA}
\email{\mailto{Fritz\_Gesztesy@baylor.edu}}
\urladdr{\url{http://www.baylor.edu/math/index.php?id=935340}}

\author[M.\ M.\ H.\ Pang]{Michael M.\ H.\ Pang}
\address{Department of Mathematics,
University of Missouri, Columbia, MO 65211, USA}
\email{\mailto{pangm@missouri.edu}}
\urladdr{\url{https://www.math.missouri.edu/people/pang}}

\dedicatory{Dedicated, with admiration, to the memory of Boris Pavlov (1936--2016)}


\date{\today}
\subjclass[2010]{Primary 42A82, 42B15, 43A35; Secondary 43A15, 46E40.}
\keywords{Positive definiteness, conditional positive definiteness, positivity preserving operators, translation invariant operators.}

\begin{abstract} 
We characterize positivity preserving, translation invariant, linear operators in $L^p(\bbR^n)^m$, 
$p \in [1,\infty)$, $m,n \in \bbN$. 
\end{abstract}

\maketitle 


\section{Introduction}  \lb{s1}

This note should be viewed as an addendum to our paper \cite{GP17}, which was devoted to (conditional) positive semidefiniteness of matrix-valued functions and positivity preserving operators on spaces of matrix-valued functions. In the present note we consider the case of vector-valued functions.  

More precisely, recalling that $F \colon \bbR^n \to \bbC$ is called {\bf positive semidefinite} if for all 
$N \in \bbN$, $x_p \in \bbR^n$, $1 \leq p \leq N$, the matrix $(F(x_p - x_q))_{1 \leq p,q \leq N} \in \bbC^{N \times N}$ is positive semidefinite, the principal result proved in this note on positivity preserving, translation invariant, linear operators in $L^p(\bbR^n)^m$, $p \in [1,\infty)$, $m,n \in \bbN$, reads as follows:

\begin{theorem} \lb{t1.1} 
Let $G\colon \bbR^n \to \bbC^{m \times m}$ be bounded and continuous. Then the following items 
$(i)$--$(iv)$ are equivalent: \\[1mm]
$(i)$ For all $1 \leq j,k \leq m$, $G_{j,k} \colon \bbR^n \to \bbC$ is positive semidefinite. \\[1mm]
$(ii)$ The linear operator, 
\begin{equation} 
G(- i \nabla)\big|_{C_0^{\infty}(\bbR^n)^m} \colon 
\begin{cases} C_0^{\infty}(\bbR^n)^m \to 
C_{\infty}(\bbR^n)^m, \\
(f_1,\dots,f_m)^\top \mapsto 
\Big(G\big(f_1^\wedge,\dots,f_m^{\wedge}\big)^\top\Big)^{\vee},
\end{cases}       \lb{1.1} 
\end{equation} 
extends boundedly to $G(- i \nabla) \in \cB\big(L^1(\bbR^n)^m\big)$ 
satisfying
\begin{equation}
G(- i \nabla) \big(L^1(\bbR^n)_+^m\big) \subseteq L^1(\bbR^n)_+^m. 
\end{equation}
$(iii)$ There exists $p \in (1,\infty)$ such that the linear operator \eqref{1.1} extends  boundedly to $G(- i \nabla) \in 
\cB\big(L^p(\bbR^n)^m\big)$ 
satisfying
\begin{equation}
G(- i \nabla) \big(L^p(\bbR^n)_+^m\big) \subseteq L^p(\bbR^n)_+^m. 
\end{equation}
$(iv)$ For all $p \in (1,\infty)$, the linear operator \eqref{1.1} extends boundedly to $G(- i \nabla) \in \cB\big(L^p(\bbR^n)^m\big)$ 
satisfying
\begin{equation}
G(- i \nabla) \big(L^p(\bbR^n)_+^m\big) \subseteq 
L^p(\bbR^n)_+^m. 
\end{equation}
\end{theorem}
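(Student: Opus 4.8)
The plan is to prove the cycle of implications $(i) \Rightarrow (ii) \Rightarrow (iii)$ is trivial (just take one value of $p$), $(iii) \Rightarrow (i)$, $(i) \Rightarrow (iv)$, and $(iv) \Rightarrow (iii)$ is trivial. So the real content is $(iii) \Rightarrow (i)$ and $(i) \Rightarrow (ii)$ and $(i) \Rightarrow (iv)$. I would first reduce everything to the scalar, diagonal-entry picture: if $G(-i\nabla)$ is positivity preserving on $L^p(\bbR^n)^m_+$, then testing on vectors $f = (0,\dots,f_k,\dots,0)^\top$ with $f_k \ge 0$ and projecting onto the $j$-th component shows that the scalar operator with symbol $G_{j,k}$ maps $L^p(\bbR^n)_+$ into itself. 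Thus Theorem~\ref{t1.1} is equivalent to the $m=1$ case applied entrywise, and the $m=1$ case is precisely the classical statement that a bounded continuous symbol generates a positivity preserving translation invariant operator on $L^p$ iff the symbol is positive semidefinite in the sense of Bochner — which is exactly the content of the scalar results in \cite{GP17}. So in practice I would state a scalar lemma (bounded continuous $g\colon \bbR^n \to \bbC$; then $g(-i\nabla)$ extends to a positivity preserving operator on $L^p(\bbR^n)$ for some/all $p \in (1,\infty)$, equivalently on $L^1$, iff $g$ is positive semidefinite) citing \cite{GP17}, and then deduce the vector case.

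For $(i) \Rightarrow (ii)$: by Bochner's theorem each $G_{j,k}$ is the Fourier transform of a finite positive measure $\mu_{j,k}$ on $\bbR^n$. Then on $C_0^\infty(\bbR^n)^m$ the operator $G(-i\nabla)$ acts as convolution with the matrix measure $\mu = (\mu_{j,k})$, i.e. $(G(-i\nabla)f)_j = \sum_k \mu_{j,k} * f_k$. Since each $\mu_{j,k}$ is a finite positive measure, convolution with $\mu_{j,k}$ is a bounded operator on $L^1(\bbR^n)$ of norm $\mu_{j,k}(\bbR^n) = G_{j,k}(0)$, and it maps nonnegative functions to nonnegative functions; summing over $k$ preserves nonnegativity of each component. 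Hence $G(-i\nabla)$ extends to $\cB(L^1(\bbR^n)^m)$ with $G(-i\nabla)(L^1(\bbR^n)^m_+) \subseteq L^1(\bbR^n)^m_+$. The same convolution-with-finite-positive-measure argument works on every $L^p(\bbR^n)$, $p \in [1,\infty)$, by Young's inequality, giving $(i) \Rightarrow (iv)$ simultaneously; one only has to check that the extension agrees with the Fourier-multiplier definition \eqref{1.1} on the common core $C_0^\infty(\bbR^n)^m$, which is immediate since $(\mu_{j,k} * f_k)^\wedge = \mu_{j,k}^\wedge \, f_k^\wedge = G_{j,k} f_k^\wedge$.

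For $(iii) \Rightarrow (i)$ (and likewise $(iv) \Rightarrow (i)$, though that follows a fortiori): fix the $p \in (1,\infty)$ for which the extension exists and is positivity preserving. Apply the operator to test vectors supported in a single slot as above to get, for each fixed pair $(j,k)$, a bounded translation invariant operator $T_{j,k}$ on $L^p(\bbR^n)$ with Fourier multiplier $G_{j,k}$ (continuity and boundedness of $G_{j,k}$ make the multiplier description legitimate on $C_0^\infty$) which is positivity preserving on $L^p(\bbR^n)_+$. Now invoke the scalar half of the theory from \cite{GP17}: a bounded continuous symbol whose associated $L^p$-multiplier operator is positivity preserving must be positive semidefinite. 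The standard route is to test $T_{j,k}$ against $\phi \ge 0$ and $\psi \ge 0$ with $\widehat\phi, \widehat\psi$ suitably chosen bump functions and to pass, via translations and dilations and an approximate-identity argument, to the pointwise statement $\sum_{p,q} c_p \overline{c_q} G_{j,k}(x_p - x_q) \ge 0$; equivalently one shows that $G_{j,k}$, being a bounded continuous positive-type-on-$L^p$ symbol, is the transform of a positive measure. Since this scalar lemma is already established in \cite{GP17}, I would simply cite it. I expect the main obstacle to be purely expository: making the reduction from the vector case to the scalar case airtight — in particular verifying that the boundedly extended operator really is given by the multiplier $G$ on the Schwartz-type core and that the slot-wise restriction commutes with taking the bounded extension — after which no genuinely new analysis beyond \cite{GP17} is needed.
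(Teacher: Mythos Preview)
Your reduction to the scalar case via single-slot test vectors is exactly right and is also how the paper handles the converse directions; the paper does not state a separate scalar lemma but simply carries out the scalar argument inside the proof of the vector theorem. Two differences are worth noting. First, your route for $(i)\Rightarrow(ii)$ and $(i)\Rightarrow(iv)$---Bochner gives finite positive measures $\mu_{j,k}$, convolution with a positive measure is bounded on every $L^p$ by Young and manifestly positivity preserving---is more direct than the paper's, which instead argues on the Fourier side using Schur's theorem (the Hadamard product of positive semidefinite matrices is positive semidefinite, their Lemma~\ref{l2.1}) to show that $\sum_k G_{j,k}\,f_k^{\wedge}$ is positive semidefinite and then applies Bochner once more. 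Second, your plan to dispatch the scalar direction $(iii)\Rightarrow(i)$ by citation to \cite{GP17} is where you should be careful: the paper does \emph{not} cite \cite{GP17} for this and instead gives a self-contained argument (truncate $[G(-i\nabla)f]_j$ by $\chi_{B_n(0,R)}$ to land in $L^1\cap L^2$, apply Bochner, pass to the $L^2$-limit along a subsequence to get a.e.\ convergence, then use continuity of $G_{j,k}f_k^{\wedge}$ to upgrade to all points). So the ``scalar lemma'' you want is precisely the new analytic content here; if it is in \cite{GP17} for general $p\in(1,\infty)$ you are fine, but you should check rather than assume.

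One logical slip: the claim ``$(ii)\Rightarrow(iii)$ is trivial (just take one value of $p$)'' is wrong---item $(ii)$ is the $L^1$ statement and item $(iii)$ demands some $p\in(1,\infty)$, so there is nothing to ``take.'' Your actual arguments establish $(i)\Rightarrow(ii)$, $(i)\Rightarrow(iv)\Rightarrow(iii)\Rightarrow(i)$, which leaves $(ii)\Rightarrow(i)$ unaddressed. The fix is immediate: the same slot-reduction plus the scalar lemma with $p=1$ gives $(ii)\Rightarrow(i)$ (and this is exactly what the paper does, using an approximate identity $\phi_\varepsilon$ in the $k$-th slot and letting $\varepsilon\downarrow 0$).
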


Here $L^p(\bbR^n)_+^m$ denotes the set of elements 
$f = (f_1,\dots,f_m)^{\top} \in L^p(\bbR^n)^m$ such that $f_k \geq 0$ (Lebesgue) a.e., 
$1 \leq k \leq m$. 

To make this note somewhat self-contained, we summarize in Appendix \ref{sA} the necessary background material on (conditionally) positive semidefinite functions $F \colon \bbR^n \to \bbC$ 
(hinting briefly at some matrix-valued generalizations), 
and on Fourier multipliers in $L^1(\bbR^n)$ and $L^2(\bbR^n)$.

\smallskip

Finally, we briefly summarize the basic notation employed in this paper: The Banach space of bounded linear operators on a complex Banach space $X$ is denoted by $\cB(X)$. 

For $Y$ a set, $Y^m$, $m \in \bbN$, represents the set of 
$m \times 1$ matrices with entries in $Y$; similarly, $Y^{m \times n}$, 
$m,n \in \bbN$, represents 
the set of $m \times n$ matrices with entries in $Y$.      

Unless explicitly stated otherwise, $\bbC^m$ is always equipped with the Euclidean scalar 
product $( \, \cdot \, , \, \cdot \,)_{\bbC^m}$ and associated norm $\|\, \cdot \, \|_{\bbC^m}$.  

The symbol $\cS(\bbR^n)$ denotes the standard Schwartz space of all complex-valued 
rapidly decreasing functions on $\bbR^n$ . In addition, we employ the spaces,
\begin{align} 
& C_0^{\infty}(\bbR^n) = \{f \in C^{\infty}(\bbR^n) \, | \, \supp\,(f) \, \text{compact}\},    \lb{1.10} \\
& C_{\infty}(\bbR^n) = \{f \in C(\bbR^n) \, | \, \lim_{|x| \to \infty} f(x) = 0 \}. 
\lb{1.12}
\end{align}
Unless explicitly stated otherwise, the spaces \eqref{1.10}--\eqref{1.12} are always equipped with the 
norm $\|f\|_{\infty} = {\rm ess.sup}_{x \in \bbR^n} | f(x) |$. 

For brevity, we will omit displaying the Lebesgue measure $d^n x$ in 
$L^p(\bbR^n)$, $p \in [1,\infty)\cup \{\infty\}$, whenever the latter is understood. The norm for 
$ f = (f_1,\dots,f_m)^\top \in L^p(\bbR^n)^m$, $ p \in [1,\infty)$, $m \in \bbN$, is defined by
\begin{equation}
\|f\|_{L^p(\bbR^n)^m} = \sum_{j=1}^m \|f_j\|_{L^p(\bbR^n)}, \quad 
 f = (f_1,\dots,f_m)^\top \in L^p(\bbR^n)^m. 
\end{equation}
The symbols $L^p(\bbR^n)_+^m$ \big(resp., $C_0^{\infty}(\bbR^n)_+^m$\big) 
represent elements of $L^p(\bbR^n)^m$ \big(resp., $C_0^{\infty}(\bbR^n)^m$\big) 
with all entries nonnegative (Lebesgue) a.e.

The Fourier and inverse Fourier transforms on $\cS(\bbR^n)$ are denoted by the pair of formulas,
\begin{align}
(\cF f)(y) = f^{\wedge}(y) &= (2 \pi)^{-n/2} \int_{\bbR^n} e^{- i (y \cdot x)} f(x) \, d^n x,    \lb{1.15} \\
(\cF^{-1} g)(x) = g^{\vee}(x) &= (2 \pi)^{-n/2} \int_{\bbR^n} e^{i (x \cdot y)} g(y) \, d^n y, \lb{1.16} \\
& \hspace*{2.75cm}  f, g \in \cS(\bbR^n),    \no 
\end{align}
and we use the same notation for the appropriate extensions, where 
$\cS(\bbR^n)$ is replaced by $L^p(\bbR^n)$, $p \in [1,\infty)$. 

The open ball in $\bbR^n$ with center $x_0 \in \bbR^n$ and radius $r_0 > 0$ is denoted by 
the symbol $B_n(x_0, r_0)$, the norm of vectors $x \in \bbR^n$ is denoted by $\|x\|_{\bbR^n}$, the 
scalar product of $x, y \in \bbR^n$, is abbreviated by $(x, y)_{\bbR^n}$.  

With $\gM_n$ the $\sigma$-algebra of all Lebesgue measurable subsets 
of $\bbR^n$ and for $E \in \gM_n$, 
the $n$-dimensional Lebesgues measure of $E$ is abbreviated by $|E|$.

\section{On Positivity Preserving Linear Operators in $L^p(\bbR^n)^m$} \lb{s2}

In this section we characterize linear, positivity preserving, translation invariant operators in $L^p(\bbR^n)^m$, $p \in [1,\infty)$, $m,n \in \bbN$. 

For basic notions and conventions used we refer to the background material in Appendix \ref{sA}. 

We start with the following result: 

\begin{lemma} \lb{l2.1}  
Let $n \in \bbN$, and suppose that $F_{\ell} \colon \bbR^n \to \bbC$, $\ell =1,2$, are positive 
semidefinite. Then $F_1 F_2:\bbR^n \to \bbC$ is positive semidefinite. 
\end{lemma}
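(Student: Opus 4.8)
The plan is to reduce the statement to the classical fact that the Schur (entrywise) product of two positive semidefinite matrices is positive semidefinite, since positive semidefiniteness of a function $F$ is defined precisely in terms of the matrices $(F(x_p - x_q))_{1 \leq p,q \leq N}$. So fix $N \in \bbN$ and points $x_1, \dots, x_N \in \bbR^n$, and consider the two matrices $A = (F_1(x_p - x_q))_{1 \leq p,q \leq N}$ and $B = (F_2(x_p - x_q))_{1 \leq p,q \leq N}$, both of which are positive semidefinite by hypothesis. The matrix associated to $F_1 F_2$ at these same points is exactly the Hadamard product $A \circ B = (A_{p,q} B_{p,q})_{1 \leq p,q \leq N}$, so the claim reduces to showing $A \circ B \geq 0$.

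For the Schur product step I would invoke the Schur product theorem, which I would prove quickly if it is not already quoted in Appendix \ref{sA}: write $A = C^* C$ with $C = (c_{k,p})$ (Cholesky or spectral square root), so $A_{p,q} = \sum_k \overline{c_{k,p}} \, c_{k,q}$. Then for any vector $\xi = (\xi_1,\dots,\xi_N)^\top \in \bbC^N$,
\begin{equation}
\sum_{p,q=1}^N \overline{\xi_p} \, \xi_q \, A_{p,q} B_{p,q}
= \sum_{k} \sum_{p,q=1}^N \overline{(c_{k,p} \xi_p)} \, (c_{k,q} \xi_q) \, B_{p,q}
= \sum_{k} \big( B \eta^{(k)}, \eta^{(k)} \big)_{\bbC^N} \geq 0,
\end{equation}
where $\eta^{(k)} = (c_{k,1}\xi_1, \dots, c_{k,N}\xi_N)^\top$ and each summand is nonnegative because $B \geq 0$. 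Hence $A \circ B \geq 0$.

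Since $N$ and the points $x_1,\dots,x_N$ were arbitrary, this shows $(\,(F_1 F_2)(x_p - x_q)\,)_{1 \leq p,q \leq N} \geq 0$ for all such choices, which is exactly the definition of $F_1 F_2$ being positive semidefinite. I do not expect any genuine obstacle here; the only thing to be a little careful about is matching conventions, namely that the ordered differences $x_p - x_q$ appear in the same positions in all three matrices, so that the matrix for the product really is the Hadamard product of the two given matrices and not some rearrangement. (Continuity or boundedness of $F_1, F_2$ plays no role in this particular lemma, though it is of course relevant elsewhere in the paper.)
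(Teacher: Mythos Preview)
Your proof is correct and follows essentially the same route as the paper: both reduce the statement to the Schur product theorem by observing that the matrix $((F_1 F_2)(x_p - x_q))_{1 \leq p,q \leq N}$ is the Hadamard product of the two positive semidefinite matrices coming from $F_1$ and $F_2$. The only difference is that the paper simply cites Schur's theorem (Reed--Simon~IV, p.~215), whereas you supply a short self-contained proof of it via the factorization $A = C^* C$.
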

\begin{proof}
Let $N \in \bbN$, $x_p \in \bbR^n$, $1 \leq p \leq N$, then by hypothesis, 
$(F_{\ell}(x_p - x_q))_{1 \leq p,q \leq N} \geq 0$, $\ell = 1,2$, and hence by Schur's theorem 
(see, e.g., the Lemma in \cite[p.~215]{RS78})
\begin{align}
\begin{split} 
& (F_1F_2)(x_p - x_q))_{1 \leq p,q \leq N}    \\
& \quad = (F_1(x_p - x_q))_{1 \leq p,q \leq N} \circ_H (F_2(x_p - x_q))_{1 \leq p,q \leq N} \geq 0. 
\end{split} 
\end{align}
Here $A \circ_H B$ denotes the {\it Hadamard product} of two matrices $A, B \in \bbC^{N \times N}$, 
defined by 
\begin{equation}
(A \circ_H B)_{j,k} = A_{j,k} B_{j,k}, \quad 1 \leq j, k \leq N. 
\end{equation}
\end{proof}

The principal result on positivity preserving, translation invariant, linear operators in $L^p(\bbR^n)^m$, $p \in [1,\infty)$, $m,n \in \bbN$, proved in this note then reads as follows:

\begin{theorem} \lb{t2.2} 
Let $m, n \in \bbN$, and $G\colon \bbR^n \to \bbC^{m \times m}$ be bounded and continuous. Then the following items 
$(i)$--$(iv)$ are equivalent: \\[1mm]
$(i)$ For all $1 \leq j,k \leq m$, $G_{j,k} \colon \bbR^n \to \bbC$ is positive semidefinite. \\[1mm]
$(ii)$ The linear operator, 
\begin{equation} 
G(- i \nabla)\big|_{C_0^{\infty}(\bbR^n)^m} \colon 
\begin{cases} C_0^{\infty}(\bbR^n)^m \to 
C_{\infty}(\bbR^n)^m, \\
(f_1,\dots,f_m)^\top \mapsto 
\Big(G\big(f_1^\wedge,\dots,f_m^{\wedge}\big)^\top\Big)^{\vee},
\end{cases}       \lb{2.1} 
\end{equation} 
extends boundedly to $G(- i \nabla) \in \cB\big(L^1(\bbR^n)^m\big)$ 
satisfying
\begin{equation}
G(- i \nabla) \big(L^1(\bbR^n)_+^m\big) \subseteq L^1(\bbR^n)_+^m. 
\end{equation}
$(iii)$ There exists $p \in (1,\infty)$ such that the linear operator \eqref{2.1} extends  boundedly to $G(- i \nabla) \in \cB\big(L^p(\bbR^n)^m\big)$ 
satisfying
\begin{equation}
G(- i \nabla) \big(L^p(\bbR^n)_+^m\big) \subseteq L^p(\bbR^n)_+^m. 
\end{equation}
$(iv)$ For all $p \in (1,\infty)$, the linear operator \eqref{2.1} extends boundedly to  
$G(- i \nabla) \in \cB\big(L^p(\bbR^n)^m\big)$ 
satisfying
\begin{equation}
G(- i \nabla) \big(L^p(\bbR^n)_+^m\big) \subseteq L^p(\bbR^n)_+^m. 
\end{equation}
\end{theorem}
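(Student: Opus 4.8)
The plan is to prove the cycle of implications $(ii) \Rightarrow (i)$, $(iii) \Rightarrow (i)$, and $(i) \Rightarrow (iv)$, together with the trivial $(iv) \Rightarrow (iii)$, so that all four items become equivalent. The scalar-valued version of this equivalence is exactly the classical characterization of positivity preserving Fourier multipliers (essentially Bochner's theorem combined with the description of $L^p$ Fourier multipliers), and is recorded in Appendix \ref{sA}; the point here is to reduce the matrix/vector case to $m$ scalar problems, one for each entry $G_{j,k}$. The key structural observation is that $G(-i\nabla)$ acts on $f = (f_1,\dots,f_m)^\top$ by $(G(-i\nabla) f)_j = \sum_{k=1}^m G_{j,k}(-i\nabla) f_k$, so testing on vectors supported in a single coordinate slot isolates the individual multiplier operators $G_{j,k}(-i\nabla)$.

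First I would establish $(i) \Rightarrow (iv)$. Fix $p \in (1,\infty)$. For each pair $(j,k)$, $G_{j,k}$ is bounded, continuous, and positive semidefinite, so by Bochner's theorem $G_{j,k} = \widehat{\mu_{j,k}}$ (up to normalization constants from \eqref{1.15}) for a finite positive Borel measure $\mu_{j,k}$ on $\bbR^n$; then $G_{j,k}(-i\nabla)$ is convolution with $\mu_{j,k}$, which is bounded on every $L^p(\bbR^n)$, $p \in [1,\infty)$ (Young's inequality for the convolution of an $L^p$ function with a finite measure), and maps $L^p(\bbR^n)_+$ into $L^p(\bbR^n)_+$ since $\mu_{j,k} \geq 0$. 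Summing over $k$ shows $G(-i\nabla)$ extends to a bounded operator on $L^p(\bbR^n)^m$ with $G(-i\nabla)(L^p(\bbR^n)_+^m) \subseteq L^p(\bbR^n)_+^m$. The same argument with $p = 1$ gives $(i) \Rightarrow (ii)$ as well, which I would include for completeness (it makes the diagram fully connected). The implication $(iv) \Rightarrow (iii)$ is immediate.

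The converse directions $(ii) \Rightarrow (i)$ and $(iii) \Rightarrow (i)$ are where the work lies, and the main obstacle is extracting positive semidefiniteness of each scalar symbol $G_{j,k}$ from the positivity of the vector operator. The strategy is: given $j,k$, feed $G(-i\nabla)$ vectors $f = (0,\dots,0,\varphi,0,\dots,0)^\top$ with $\varphi \in C_0^\infty(\bbR^n)_+$ in the $k$-th slot; then the $j$-th component of the output is $G_{j,k}(-i\nabla)\varphi$, and positivity preservation forces $G_{j,k}(-i\nabla)\varphi \geq 0$ a.e. for all such $\varphi$. Together with boundedness on $L^1$ (resp.\ $L^p$), this says $G_{j,k}(-i\nabla)$ is a positivity preserving, bounded, translation invariant operator on $L^1(\bbR^n)$ (resp.\ $L^p(\bbR^n)$), and now the scalar theory from Appendix \ref{sA} — the characterization of such operators as convolutions against finite positive measures, whose Fourier transforms are precisely the bounded continuous positive semidefinite functions — yields that $G_{j,k}$ is positive semidefinite. (One subtlety in the $(iii) \Rightarrow (i)$ case: boundedness of $G(-i\nabla)$ on $L^p$ for a single $p \in (1,\infty)$ must be shown to imply the corresponding scalar multiplier is a genuine $L^p$ Fourier multiplier with symbol $G_{j,k}$; this follows because $G$ is continuous and bounded, so $G(-i\nabla)$ is already well defined on $C_0^\infty(\bbR^n)^m$ via \eqref{2.1}, and the $k$-to-$j$ restriction of the bounded extension agrees with $G_{j,k}(-i\nabla)$ there by continuity.) The combinatorial part — keeping track of which entries are being probed and invoking Lemma \ref{l2.1} if one wishes to phrase intermediate steps in terms of products — is routine once the reduction is set up.

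Thus the essential new content beyond the scalar case is merely the "slot-probing" reduction, and I expect no analytic obstacle there; the only care needed is ensuring that the restriction of the bounded vector extension to a single coordinate really does coincide with the scalar operator $G_{j,k}(-i\nabla)$ defined by \eqref{2.1}, which is a density/continuity argument using that $C_0^\infty(\bbR^n)$ is dense in $L^p(\bbR^n)$ for $p \in [1,\infty)$.
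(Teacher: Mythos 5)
Your overall architecture (the ``slot-probing'' reduction to the scalar entries $G_{j,k}$, together with the cycle $(i)\Rightarrow(iv)\Rightarrow(iii)\Rightarrow(i)$ and the equivalence with $(ii)$) is the same as the paper's, and your $(i)\Rightarrow(ii)/(iv)$ and $(ii)\Rightarrow(i)$ arguments are sound. For $(i)\Rightarrow(ii)$ your route is in fact a bit cleaner than the paper's: you observe directly that $G_{j,k}(-i\nabla)f$ is a positive constant times $f*\mu_{j,k}$ with $\mu_{j,k}\ge 0$, which preserves nonnegativity and is $L^p$-bounded by Young's inequality; the paper instead argues $f\ge 0\Rightarrow f^\wedge$ positive semidefinite (Bochner), multiplies by $G_{j,k}$ via the Schur/Hadamard product (Lemma \ref{l2.1}), and applies Bochner a second time. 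Both work; yours dispenses with Lemma \ref{l2.1}.

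The genuine gap is in $(iii)\Rightarrow(i)$. You reduce to the statement that $G_{j,k}(-i\nabla)$ is a bounded, translation invariant, positivity preserving operator on $L^p(\bbR^n)$ for a single $p\in(1,\infty)$, and then invoke ``the characterization of such operators as convolutions against finite positive measures'' from Appendix \ref{sA}. But the appendix (Theorem \ref{tA.7}) only characterizes $\cM^{1,1}(\bbR^n)$ (convolution with a finite complex measure) and $\cM^{2,2}(\bbR^n)$ (convolution with a tempered distribution whose Fourier transform lies in $L^\infty$); no such statement is available there for general $p\in(1,\infty)$, and even at $p=2$ the implication ``positivity preserving $\Rightarrow$ symbol positive semidefinite'' is not a citation but precisely the nontrivial content to be supplied. (The $L^p$ statement is true --- one can show the convolution kernel is a positive tempered distribution, hence a positive Radon measure, and combine duality with $u^\wedge\in L^\infty$ to get finiteness --- but that is an argument, not a reference.) The paper sidesteps this: since $G$ is bounded, $G(-i\nabla)$ is automatically bounded on $L^2(\bbR^n)^m$ by Theorem \ref{tA.7}\,$(ii)$; item $(iii)$ is used only to conclude $[G(-i\nabla)f]_j\ge 0$ for $f\in C_0^{\infty}(\bbR^n)_+^m$; one then truncates, noting $[G(-i\nabla)f]_j\chi_{B_n(0,R)}\in L^1(\bbR^n)_+\cap L^2(\bbR^n)_+$, applies Bochner to see its Fourier transform is positive semidefinite, lets $R\uparrow\infty$ in $L^2$ (hence a.e.\ along a subsequence, upgraded to everywhere by continuity of $\sum_k G_{j,k}f_k^\wedge$), and finishes with the approximate-identity argument. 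You should either prove the $L^p$ characterization you are invoking or replace that step by an argument of this truncation type.
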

\begin{proof}
We begin by proving the equivalence of items $(i)$ and $(ii)$. 

First, suppose $(i)$ holds. By 
Bochner's theorem (cf.\ Theorem \ref{tA.3}), there exist finite, nonnegative Borel measures $\mu_{j,k}$ on 
$\bbR^n$ such that $G_{j,k} = \mu_{j,k}^{\wedge}$, $1 \leq j,k \leq m$. Thus, the classical 
$L^1(\bbR^n)$-multiplier theorem (cf.\ Theorem \ref{tA.7}\,$(i)$) implies that $G_{j,k}(- i \nabla)$ is an 
$L^1(\bbR^n)$ multiplier operator for all $1 \leq j,k \leq m$, that is, $G_{j,k}(- i \nabla) \in \cM^{1,1}(\bbR^n)$, 
$1 \leq j,k \leq m$. With $\|G_{j,k}(- i \nabla)\|_{1,1}$ denoting the operator norm of $G_{j,k}(- i \nabla)$ in $L^1(\bbR^n)$, $1 \leq j,k \leq m$, we introduce 
\begin{equation}
\|G(- i \nabla)\|_{1,1} := \max_{1 \leq j,k \leq m} \{\|G_{j,k}(- i \nabla)\|_{1,1}\}. 
\end{equation}
Then, for $f = (f_1,\dots,f_m)^{\top} \in C_0^{\infty}(\bbR^n)^m$, 
\begin{equation}
\big\|\big[G(- i \nabla)\big|_{C_0^{\infty}(\bbR^n)^m} f\big]_j\big\|_{L^1(\bbR^n)} 
\leq \|G(- i \nabla)\|_{1,1} \|f\|_{L^1(\bbR^n)^m}, \quad 1 \leq j \leq m, 
\end{equation}
and hence,
\begin{equation}
\big\|G(- i \nabla)\big|_{C_0^{\infty}(\bbR^n)^m} f\big\|_{L^1(\bbR^n)^m} 
\leq m \|G(- i \nabla)\|_{1,1} \|f\|_{L^1(\bbR^n)^m}.   
\end{equation}
In other words, $G(- i \nabla)\big|_{C_0^{\infty}(\bbR^n)^m}$ extends boundedly to an operator 
$G(- i \nabla) \in \cB\big(L^1(\bbR^n)^m\big)$. 

Since by \cite[Theorem~2.29\,(c)]{AF03}, $C_0^{\infty}(\bbR^n)_+^m$ is dense in 
$L^1(\bbR^n)_+^m$, to show that item $(ii)$ holds, it suffices to prove that 
\begin{equation}
G(- i \nabla)\big(C_0^{\infty}(\bbR^n)_+^m\big) \subseteq L^1(\bbR^n)_+^m. 
\end{equation}
Let $f = (f_1,\dots,f_m)^{\top} \in C_0^{\infty}(\bbR^n)_+^m$. By Bochner's theorem 
(cf.\ Theorem \ref{tA.3}, with measures of the form $\mu_j = f_j d^n x$, $1 \leq j \leq m$), $f_k^{\wedge}$, 
$1 \leq k \leq m$, are positive semidefinite, and thus an application of Lemma \ref{l2.1} yields that 
$\sum_{k=1}^m G_{j,k} f_k^{\wedge}$, $1 \leq j \leq m$, are positive semidefinite. Since 
\begin{equation}
([G(- i \nabla) f]_j)^{\wedge} = \sum_{k=1} G_{j,k} f_k^{\wedge}, \quad 1 \leq j \leq m, 
\end{equation}
applying Bochner's theorem once more implies that $[G(- i \nabla) f]_j \geq 0$, $1 \leq j \leq m$, 
that is,
\begin{equation}
G(- i \nabla) f \in L^1(\bbR^n)_+^m. 
\end{equation}
Thus, item $(ii)$ holds.  

Next, we assume item $(ii)$ holds. Let $\varphi \colon [0,\infty) \to [0,\infty)$ satisfy 
\begin{align}
&(\alpha) \;\, \varphi(\, \cdot \,) \, \text{ is decreasing on $[0,\infty)$,}   \no  \\
& (\beta) \;\, \varphi \in C^{\infty}([0,\infty)), \quad \varphi^{(k)}(0) = 0, \; k \in \bbN, \quad \supp(\varphi) = [0,1],    \\
&(\gamma) \;\, \int_{\bbR^n} d^n x \, \phi(x) =1, \quad 
\phi(x) = \varphi(\|x\|_{\bbR^n}), \; x \in \bbR^n,   \no 
\end{align}
and introduce
\begin{equation}
\phi_{\varepsilon}(x) = \varepsilon^{-n} \phi(x/\varepsilon), \quad \varepsilon \in (0,1), \; x \in \bbR^n,
\end{equation}
implying
\begin{equation}
\lim_{\varepsilon \downarrow 0} \phi_{\varepsilon}^{\wedge}(\xi) = 1, \quad \xi \in \bbR^n.   \lb{2.15} 
\end{equation}
Moreover we introduce
\begin{equation}
\phi_{\varepsilon,k} \in L^1(\bbR^n)_+^m, \quad [\phi_{\varepsilon,k}]_j = 
\begin{cases} 0, & j \neq k, \\
\phi_{\varepsilon}, & j=k, 
\end{cases} \quad 1 \leq j, k \leq m, \; \varepsilon \in (0,1).    \lb{2.16} 
\end{equation}
Then by hypothesis,
\begin{equation}
G(- i \nabla) \phi_{\varepsilon,k} = \Big(\big(G_{1,k} \phi_{\varepsilon}^{\wedge}\big)^{\vee},
\dots, \big(G_{m,k} \phi_{\varepsilon}^{\wedge}\big)^{\vee}\Big)^{\top} \in 
L^1(\bbR^n)_+^m,
\end{equation}
and hence, once more by Bochner's theorem, $G_{j,k} \phi_{\varepsilon}^{\wedge}$, $1 \leq j,k \leq m$,
are positive semidefinite. Thus, for all $x_p \in \bbR^n$, $1 \leq p \leq N$, \eqref{2.15} implies 
\begin{equation}
0 \leq \lim_{\varepsilon \downarrow 0} \big(G_{j,k}(x_p - x_q) \phi_{\varepsilon}^{\wedge} (x_p - x_q)
\big)_{1 \leq p,q \leq N} = (G_{j,k}(x_p - x_q))_{1 \leq p,q \leq N},
\end{equation}
that is, $G_{j,k}$, $1 \leq j,k \leq m$, are positive semidefinite, implying item $(i)$.  

Next we prove that item $(ii)$ implies item $(iv)$. Assuming item $(ii)$ holds, then by the equivalence of items $(i)$ and $(ii)$ just proved, $G_{j,k}$, $1 \leq j,k \leq m$, are positive semidefinite and hence there exist finite, nonnegative Borel measures $\mu_{j,k}$ on $\bbR^n$, such that $G_{j,k} = \mu_{j,k}^{\wedge}$, $1 \leq j,k \leq m$. By the classical $L^1$-multiplier theorem (cf.\ 
Theorem \ref{tA.7}\,$(i)$), this implies that $G_{j,k}(- i \nabla)$, $1 \leq j,k \leq m$, are $L^1(\bbR^n)$ multiplier operators, and hence also $L^p(\bbR^n)$-multipliers for all $p \in [1,\infty)$ according to \cite[p.~143, remarks after Definition~2.5.11]{Gr08}. We denote by $\|G_{j,k}(- i \nabla)\|_{p,p}$ the norm of $G_{j,k}(- i \nabla)$ in $L^p(\bbR^n)$, $1 \leq j,k \leq m$, $p \in (1,\infty)$, and introduce 
\begin{equation}
\|G(- i \nabla)\|_{p,p} := \max_{1 \leq j,k \leq m} \{\|G_{j,k}(- i \nabla)\|_{p,p}\}. 
\end{equation}
Then for all $f = (f_1,\dots, f_m)^{\top} \in C_0^{\infty}(\bbR^n)^m$, 
\begin{equation}
\big\|G(- i \nabla)\big|_{C_0^{\infty}(\bbR^n)^m} f\big\|_{L^p(\bbR^n)^m} \leq 
m \|G(- i \nabla)\|_{p,p} \|f\|_{L^p(\bbR^n)^m}, 
\end{equation}
and thus $G(- i \nabla)\big|_{C_0^{\infty}(\bbR^n)^m}$ can be extended to a bounded operator 
$G(- i \nabla) \in \cB\big(L^p(\bbR^n)^m\big)$. 

Since once more by \cite[Theorem~2.29\,(c)]{AF03}, $C_0^{\infty}(\bbR^n)_+^m$ is dense in 
$L^p(\bbR^n)_+^m$, $p \in [1,\infty)$, to show that item $(iv)$ is valid it suffices to prove that 
\begin{equation}
[G(- i \nabla) f]_j \geq 0, \quad 1 \leq j \leq m, \; f =(f_1,\dots, f_m)^{\top} \in 
C_0^{\infty}(\bbR^n)_+^m, 
\end{equation}
which is implied by item $(ii)$. 

It is clear that item $(iv)$ implies item $(iii)$.

Next, we prove that item $(iii)$ implies item $(i)$. We start by proving that for all 
$f =(f_1,\dots, f_m)^{\top} \in C_0^{\infty}(\bbR^n)_+^m$, and all $1 \leq j \leq m$, 
$\sum_{k=1}^m G_{j,k} f_k^{\wedge}$ is positive semidefinite. Since $G(\, \cdot \,)$ is bounded, the map
\begin{equation}
f \mapsto G(- i \nabla) f = \big(G f^{\wedge}\big)^{\vee}, \quad f \in C_0^{\infty}(\bbR^n)^m,
\end{equation}
extends to a bounded operator in $\cB\big(\big(L^2(\bbR^n)\big)^m\big)$ by the classical 
$L^2$-multiplier theorem, Theorem \ref{tA.7}\,$(ii)$. Thus, for all $R > 0$, 
$ f =(f_1,\dots, f_m)^{\top} \in C_0^{\infty}(\bbR^n)_+^m$, and all $1 \leq j \leq m$, 
$[G(- i \nabla)f]_j \chi_{B_n(0,R)} \in L^1(\bbR^n)_+ \cap L^2(\bbR^n)_+$. Thus, again by Bochner's theorem, $([G(- i \nabla) f]_j \chi_{B_n(0,R)})^{\wedge}$, $1 \leq j \leq m$, are positive semidefinite. Hence, taking limits in $L^2(\bbR^n)$, one obtains,
\begin{equation}
\sum_{k=1}^m G_{j,k} f_k^{\wedge} = ([G(- i \nabla) f]_j)^{\wedge} = 
\lim_{R \uparrow \infty} ([G(- i \nabla) f]_j \chi_{B_n(0,R)})^{\wedge}, 
\end{equation}
and thus there exist a sequence of increasing positive numbers $\{R_{\ell}\}_{\ell \in \bbN}$, with 
$\lim_{\ell \to \infty} R_{\ell} = \infty$, and a set $E \subset \bbR^n$ of Lebesgue measure zero, $|E|=0$, such that for all $x \in \bbR^n \backslash E$,
\begin{equation}
\lim_{\ell \to \infty} ([G(- i \nabla) f]_j \chi_{B_n(0,R_{\ell})})^{\wedge} (x) = 
\sum_{k=1}^m G_{j,k}(x) f_k^{\wedge}(x).
\end{equation}
Letting $x_p \in \bbR^n$, $1 \leq p \leq N$, for each $1 \leq q \leq N$, one can choose a sequence 
$\{x_{q,r}\}_{r \in \bbN} \subset \bbR^n \backslash E$ such that $\lim_{r \to \infty} x_{q,r} = x_q$, 
$1 \leq q \leq N$, and that
\begin{equation}
(x_{p,r} - x_{q,r}) \in \bbR^n \backslash E, \quad 1 \leq p, q \leq N, \; r \in \bbN.
\end{equation}
Hence, employing that $\sum_{k=1}^m G_{j,k} f_k^{\wedge}$, $1 \leq j \leq m$, is continuous, one concludes that
\begin{align}
& \bigg(\sum_{k=1}^m G_{j,k}(x_p - x_q) f_k^{\wedge}(x_p - x_q)\bigg)_{1 \leq p, q \leq N}   \no \\
& \quad = \lim_{r \to \infty} \bigg(\sum_{k=1}^m G_{j,k}(x_{p,r} - x_{q,r}) 
f_k^{\wedge}(x_{p,r} - x_{q,r})\bigg)_{1 \leq p, q \leq N}   \no \\
& \quad = \lim_{r \to \infty} \lim_{\ell \to \infty} \big(([G(- i \nabla) f]_j 
\chi_{B_n(0,R_{\ell}})^{\wedge}(x_{p,r} - x_{q,r})\big)_{1 \leq p, q \leq N}  \no \\
& \quad \geq 0,    \lb{2.26} 
\end{align}
implying that $\sum_{k=1}^m G_{j,k} f_k^{\wedge}$, $1 \leq j \leq m$, are positive semidefinite. 
Next, introduce $\phi_{\varepsilon,k} \in C_0^{\infty}(\bbR^n)_+^m$, $1 \leq k \leq m$, 
$\varepsilon \in (0,1)$, as in 
\eqref{2.16}. Then with $f = \phi_{\varepsilon,k}$, $1 \leq k \leq m$, $\varepsilon \in (0,1)$, in 
\eqref{2.26}, one concludes that for all $\varepsilon \in (0,1)$, $1 \leq j,k \leq m$, 
$G_{j,k} \phi_{\varepsilon}^{\wedge}$ is positive semidefinite. Hence, again applying \eqref{2.15}, one obtains that $G_{j,k} = \lim_{\varepsilon \downarrow 0} G_{j,k} \phi_{\varepsilon}^{\wedge}$ is positive semidefinite, and thus item $(i)$ holds. 
\end{proof}

Given $S \in \bbC^{m \times m}$, $m \in \bbN$, its {\it Hadamard exponential}, denoted 
by $\exp_H(S)$, is defined by 
\begin{equation}
\exp_H(S) = \big(\exp_H(S)_{j,k} := \exp(S_{j,k})\big)_{1 \leq j,k \leq m}.  
\end{equation}
More generally, if $S \colon \bbR^n \to \bbC^{m \times m}$, $m,n \in \bbN$, its {\it Hadamard exponential}, denoted by $\exp_H(S(\, \cdot \,))$, is defined by 
\begin{equation}
\exp_H(S(x)) = \big(\exp_H(S(x))_{j,k} := \exp(S(x)_{j,k})\big)_{1 \leq j,k \leq m}, \quad x \in \bbR^n.  
\end{equation}

\begin{corollary} \lb{c2.3}
Let $m, n \in \bbN$, suppose that $F \colon \bbR^n \to \bbC^{m \times m}$ is continuous, and assume there exists $c \in \bbR$ such that 
\begin{equation}
\Re(F_{j,k}) \leq c, \quad 1 \leq j,k \leq m.
\end{equation}
Then the following items $(i)$--$(vi)$ are equivalent: \\[1mm]
$(i)$ There exists $p \in (1,\infty)$ such that for all $t > 0$, the linear operator
\begin{equation}
(\exp_H(tF))(- i \nabla)\big|_{C_0^{\infty}(\bbR^n)^m} \colon \begin{cases} 
C_0^{\infty}(\bbR^n)^m \to C_{\infty}(\bbR^n)^m, \\
(f_1,\dots,f_m)^{\top} \mapsto \Big(\exp_H(tF) \big(f_1^{\wedge}, \dots, f_m^{\wedge}\big)^{\top}\Big)^{\vee},
\end{cases}     \lb{2.28}
\end{equation}
extends to a bounded operator $(\exp_H(tF))(- i \nabla) \in \cB\big(L^p(\bbR^n)^m\big)$ satisfying 
\begin{equation}
(\exp_H(tF))(- i \nabla) \big(L^p(\bbR^n)_+^m\big) \subseteq L^p(\bbR^n)_+^m.  
\end{equation}
$(ii)$ For all $p \in (1,\infty)$, and all $t > 0$, the linear operator \eqref{2.28} 
extends boundedly to $(\exp_H(tF))(- i \nabla) \in \cB\big(L^p(\bbR^n)^m\big)$ satisfying 
\begin{equation}
(\exp_H(tF))(- i \nabla) \big(L^p(\bbR^n)_+^m\big) \subseteq L^p(\bbR^n)_+^m.  
\end{equation}
$(iii)$ For all $t > 0$, the linear operator \eqref{2.28} 
extends boundedly to $(\exp_H(tF))(- i \nabla)$ $\in \cB\big(L^1(\bbR^n)^m\big)$ satisfying 
\begin{equation}
(\exp_H(tF))(- i \nabla) \big(L^1(\bbR^n)_+^m\big) \subseteq L^1(\bbR^n)_+^m.  
\end{equation}
$(iv)$ For all $1 \leq j, k \leq m$, and all $t > 0$, $\exp(t F_{j,k}) \colon \bbR^n \to \bbC$ is 
positive semidefinite. \\[1mm] 
$(v)$ For all $1 \leq j, k \leq m$, $F_{j,k} \colon \bbR^n \to \bbC$ is conditionally 
positive semidefinite. \\[1mm] 
$(vi)$ For all $1 \leq j,k \leq m$, there exist $\alpha_{j,k} \in \bbR$, $\beta_{j,k} \in \bbR^n$, 
$0 \leq A(j,k) \in \bbC^{n \times n}$, and a nonnegative, finite Borel measure $\nu_{j,k}$ on $\bbR^n$, satisfying $\nu_{j,k}(\{0\}) = 0$, such that 
\begin{align}
F_{j,k}(x) &= \alpha_{j,k} + i (\beta_{j,k}, x)_{\bbR^n} - (x, A(j,k) x))_{\bbC^n}   \\
& \quad + \int_{\bbR^n} \bigg[\exp(i (x, y)_{\bbR^n}) - 1 - \f{i (x,y)_{\bbR^n}}{1 + \|y\|_{\bbR^n}^2}\bigg]
\f{1 + \|y\|_{\bbR^n}^2}{\|y\|_{\bbR^n}^2} \, d\nu_{j,k}(y), \quad x \in \bbR^n.   \no 
\end{align}
\end{corollary}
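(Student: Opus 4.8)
The plan is to reduce everything to Theorem \ref{t2.2} applied to the matrix-valued function $G = \exp_H(tF)$, together with the classical Lévy--Khinchin--type characterizations of conditionally positive semidefinite functions recalled in Appendix \ref{sA}. The logical skeleton I would use is the cycle $(i) \Leftrightarrow (ii) \Leftrightarrow (iii) \Leftrightarrow (iv)$ handled by Theorem \ref{t2.2}, and then $(iv) \Leftrightarrow (v) \Leftrightarrow (vi)$ handled by the scalar theory entrywise.

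First I would observe that the hypothesis $\Re(F_{j,k}) \le c$ guarantees that each entry $\exp(tF_{j,k})$ is bounded on $\bbR^n$, since $|\exp(tF_{j,k}(x))| = \exp(t\,\Re F_{j,k}(x)) \le e^{tc}$ for $t>0$; continuity of $F$ gives continuity of $\exp_H(tF)$, so $G := \exp_H(tF)$ satisfies the standing hypotheses of Theorem \ref{t2.2} for every fixed $t>0$. Applying Theorem \ref{t2.2} to $G$ for each $t>0$ then shows that the following are equivalent: item $(iv)$ here (for all $j,k$ and all $t>0$, $\exp(tF_{j,k})$ is positive semidefinite), item $(iii)$ here (the $L^1$ statement, for all $t>0$), item $(ii)$ here (the ``for all $p \in (1,\infty)$ and all $t>0$'' statement), and the ``there exists $p$, for all $t>0$'' formulation. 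The only subtlety in matching up with the present $(i)$ is the order of quantifiers: $(i)$ asserts ``$\exists p$ such that $\forall t>0$,\dots'', while Theorem \ref{t2.2}$(iii)$ is applied at a fixed $t$. So I would argue: $(ii) \Rightarrow (i)$ is trivial; and $(i) \Rightarrow (iv)$ because fixing the $p$ from $(i)$ and running Theorem \ref{t2.2}$(iii)\Rightarrow(i)$ at each individual $t>0$ yields positive semidefiniteness of $\exp(tF_{j,k})$ for all $t>0$. Combined with $(iv) \Rightarrow (ii)$ (apply Theorem \ref{t2.2}$(i)\Rightarrow(iv)$ at each $t$) and $(iv)\Leftrightarrow(iii)$ (Theorem \ref{t2.2}$(i)\Leftrightarrow(ii)$ at each $t$), this closes the loop among $(i)$, $(ii)$, $(iii)$, $(iv)$.

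For the remaining equivalences $(iv) \Leftrightarrow (v) \Leftrightarrow (vi)$, everything is entrywise and scalar-valued, so I would invoke the background results in Appendix \ref{sA}. The equivalence $(iv) \Leftrightarrow (v)$ is exactly Schoenberg's theorem: a continuous function $F_{j,k}$ is conditionally positive semidefinite if and only if $\exp(tF_{j,k})$ is positive semidefinite for all $t>0$ (the only mildly delicate point being that the normalization $F_{j,k}(0)$ real, or the constant $\alpha_{j,k}$, does not affect conditional positive semidefiniteness, which is why no extra hypothesis at the origin is needed — boundedness of $\Re F_{j,k}$ from above is what makes $\exp(tF_{j,k})$ bounded and continuous so that Bochner applies to it). The equivalence $(v) \Leftrightarrow (vi)$ is the Lévy--Khinchin representation for continuous conditionally positive semidefinite functions on $\bbR^n$, again available in the appendix; one simply reads off the parameters $\alpha_{j,k}, \beta_{j,k}, A(j,k), \nu_{j,k}$ entrywise, with $A(j,k) \ge 0$ corresponding to the Gaussian part and $\nu_{j,k}$ (finite, charging no mass at $0$, against the weight $(1+\|y\|^2)/\|y\|^2$) to the jump part. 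I would present $(v) \Rightarrow (vi)$ and $(vi) \Rightarrow (v)$ as a direct citation of that theorem applied to each $F_{j,k}$.

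The main obstacle is not any deep new computation but the bookkeeping of quantifiers and the verification that the standing hypotheses transfer correctly: one must check that $\exp_H(tF)$ is genuinely bounded and continuous so that Theorem \ref{t2.2} is applicable (this is precisely where $\Re(F_{j,k}) \le c$ is used and where it must be stated as a hypothesis), and one must be careful that item $(i)$'s ``there exists $p$'' is strong enough to conclude $(iv)$ for \emph{all} $t$ — which works because the chosen $p$ is independent of $t$, so Theorem \ref{t2.2} can be applied at each $t$ separately with that same $p$. A secondary point worth a sentence is that in $(iv) \Rightarrow (iii)$ one is applying the scalar Bochner/$L^1$-multiplier machinery to $\exp(tF_{j,k}) = \mu_{j,k,t}^{\wedge}$, a finite nonnegative measure depending on $t$, which is fine since $t$ is fixed throughout each application. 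I do not anticipate needing anything beyond Theorem \ref{t2.2}, Schoenberg's theorem, and the Lévy--Khinchin formula, all of which are either proved above or collected in Appendix \ref{sA}.
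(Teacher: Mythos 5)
Your proposal is correct and follows essentially the same route as the paper: the equivalence of $(i)$--$(iv)$ is obtained by applying Theorem \ref{t2.2} with $G=\exp_H(tF)$ for each fixed $t>0$ (the hypothesis $\Re(F_{j,k})\le c$ ensuring boundedness and continuity of $\exp_H(tF)$), and $(iv)\Leftrightarrow(v)\Leftrightarrow(vi)$ is the classical Schoenberg/L\'evy--Khinchine theory applied entrywise (Theorem \ref{tA.2}, resp.\ \cite[Theorems~XIII.52 and XIII.53]{RS78}). Your explicit handling of the quantifier order in item $(i)$ (fixing $p$ and running Theorem \ref{t2.2} at each $t$ separately) is a point the paper leaves implicit, but it is the same argument.
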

\begin{proof}
The equivalence of items $(iv)$, $(v)$, and $(vi)$ follows from classical results (cf.\ 
\cite[Theorems~XIII.52 and XIII.53]{RS78}). The equivalence of items $(i)$, $(ii)$, $(iii)$, and $(iv)$ 
follows from Theorem \ref{t2.2}, putting $G = \exp_H(tF)$, $t > 0$. 
\end{proof}

\begin{corollary} \lb{c2.4}
Let $m, n \in \bbN$, suppose that $F \colon \bbR^n \to \bbC^{m \times m}$ is a continuous,  
diagonal, matrix-valued function, and assume there exists $c \in \bbR$ such that 
\begin{equation}
\Re(F_{j,j}) \leq c, \quad 1 \leq j \leq m.
\end{equation}
Then the following items $(i)$--$(iv)$ are equivalent: \\[1mm] 
$(i)$ For all $1 \leq j \leq m$, $F_{j,j}$ is conditionally positive semidefinite. \\[1mm]
$(ii)$ For all $t > 0$, the linear operator
\begin{equation}
(\exp(tF))(- i \nabla)\big|_{C_0^{\infty}(\bbR^n)^m} \colon \begin{cases} 
C_0^{\infty}(\bbR^n)^m \to C_{\infty}(\bbR^n)^m, \\
(f_1,\dots,f_m)^{\top} \mapsto \Big(\exp(tF) \big(f_1^{\wedge}, \dots, f_m^{\wedge}\big)^{\top}\Big)^{\vee},
\end{cases}    \lb{2.34}
\end{equation}
extends boundedly to $(\exp(tF))(- i \nabla) \in \cB\big(L^1(\bbR^n)^m\big)$ satisfying 
\begin{equation}
(\exp(tF))(- i \nabla) \big(L^1(\bbR^n)_+^m\big) \subseteq L^1(\bbR^n)_+^m.  
\end{equation}
$(iii)$ There exists $p \in (1,\infty)$ such that for all $t > 0$, the linear operator \eqref{2.34} 
extends boundedly to $(\exp(tF))(- i \nabla) \in \cB\big(L^p(\bbR^n)^m\big)$ satisfying 
\begin{equation}
(\exp(tF))(- i \nabla) \big(L^p(\bbR^n)_+^m\big) \subseteq L^p(\bbR^n)_+^m.  
\end{equation}
$(iv)$ For all $p \in (1,\infty)$, and all $t > 0$, the linear operator \eqref{2.34} 
extends boundedly to $(\exp(tF))(- i \nabla) \in \cB\big(L^p(\bbR^n)^m\big)$ satisfying 
\begin{equation}
(\exp(tF))(- i \nabla) \big(L^p(\bbR^n)_+^m\big) \subseteq L^p(\bbR^n)_+^m.  
\end{equation}
\end{corollary}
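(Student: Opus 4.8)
The plan is to reduce the matrix statement to a family of fixed-$t$ applications of Theorem \ref{t2.2}, combined with the classical equivalence between conditional positive semidefiniteness of a scalar function and positive semidefiniteness of all its exponentials. The crucial simplification is that, because $F$ is diagonal, so is its ordinary matrix exponential: $\exp(tF) = \diag\big(\exp(tF_{1,1}),\dots,\exp(tF_{m,m})\big)$ for every $t>0$, so no Hadamard exponential enters here and the problem genuinely decouples.

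First I would record that for each $t>0$ the function $G_t := \exp(tF)\colon \bbR^n \to \bbC^{m\times m}$ is bounded and continuous: continuity is inherited from $F$, and boundedness follows from $|\exp(tF_{j,j}(x))| = \exp(t\,\Re F_{j,j}(x)) \leq \exp(tc)$. Moreover $(G_t)_{j,k} = 0$ for $j \neq k$, and the zero function is (trivially) positive semidefinite, while $(G_t)_{j,j} = \exp(tF_{j,j})$; hence item $(i)$ of Theorem \ref{t2.2} applied to $G = G_t$ holds if and only if $\exp(tF_{j,j})$ is positive semidefinite for every $1 \leq j \leq m$. Note also that with $G = G_t$ the operator \eqref{2.1} coincides with \eqref{2.34}, so items $(ii)$--$(iv)$ of Theorem \ref{t2.2} for $G_t$ are exactly the fixed-$t$ forms of the $L^1$- and $L^p$-statements in the corollary.

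Next I would invoke the classical result underlying Corollary \ref{c2.3} (the equivalence of its items $(iv)$ and $(v)$, resting on \cite[Theorems~XIII.52 and XIII.53]{RS78}): under $\Re F_{j,j} \leq c$, the scalar function $F_{j,j}$ is conditionally positive semidefinite if and only if $\exp(tF_{j,j})$ is positive semidefinite for all $t>0$. Combined with the previous paragraph, item $(i)$ of the corollary is equivalent to: for all $t>0$, item $(i)$ of Theorem \ref{t2.2} holds for $G_t$. Theorem \ref{t2.2} then yields, for each fixed $t>0$, the equivalence of that statement with its items $(ii)$, $(iii)$, $(iv)$ for $G_t$. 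Quantifying over $t>0$ gives $(i) \Leftrightarrow (ii)$ and $(i) \Leftrightarrow (iv)$ at once. For $(iii)$, the implication $(iv)\Rightarrow(iii)$ is immediate; conversely, if a single $p\in(1,\infty)$ works for all $t>0$, then for each individual $t$ that $p$ witnesses item $(iii)$ of Theorem \ref{t2.2} for $G_t$, whence item $(i)$ of Theorem \ref{t2.2} holds for every $G_t$, i.e.\ item $(i)$ of the corollary holds.

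I do not anticipate a real obstacle: all the analytic content — the $L^1$- and $L^p$-multiplier theory, Bochner's theorem, and the Schoenberg-type characterization of conditional positive semidefiniteness — is already packaged in Theorem \ref{t2.2} and in the results cited for Corollary \ref{c2.3}. The only points requiring any care are the harmless bookkeeping of the quantifiers in item $(iii)$ (a single $p$ valid for all $t$ versus, a priori, a $p$ depending on $t$), and the trivial observation that vanishing off-diagonal entries are positive semidefinite, so that the diagonal case of Theorem \ref{t2.2} collapses to the $m$ scalar conditions on $\exp(tF_{j,j})$.
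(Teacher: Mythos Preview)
Your proposal is correct and follows essentially the same approach as the paper: reduce to Theorem \ref{t2.2} with $G = \exp(tF)$, observe that the diagonal structure collapses condition $(i)$ of that theorem to positive semidefiniteness of each $\exp(tF_{j,j})$, and then invoke the Schoenberg-type equivalence of Theorem \ref{tA.2}\,$(ii)$--$(iii)$. Your explicit treatment of the quantifier interchange in item $(iii)$ (a single $p$ valid for all $t$) is a nice point of care that the paper leaves implicit.
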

\begin{proof}
By the assumptions on $F$, $\exp(tF) \colon \bbR^n \to \bbC^{m \times m}$ is a bounded, continuous, diagonal, matrix-valued function whose diagonal entries are 
\begin{equation}
\exp(tF)_{j,j} = \exp(tF_{j,j}), \quad t > 0, \; 1 \leq j \leq m.
\end{equation}
By Theorem \ref{t2.2}, with $G = \exp(t F)$, $t > 0$, it suffices to prove that item $(i)$ is equivalent to 
\\[1mm]
$(i)(\alpha)$ For all $1 \leq j,k \leq m$, and all $t > 0$, $\exp(t F)_{j,k} \colon \bbR^n \to \bbC$ is positive semidefinite. \\[1mm] 
Since $\exp(t F)$ is a diagonal matrix, item $(i)(\alpha)$ is equivalent to \\[1mm]
$(i)(\beta)$ For all $1 \leq j \leq m$, and all $t > 0$, $\exp(t F)_{j,j} = \exp(t F_{j,j})$ is positive semidefinite. \\[1mm] 
However, the equivalence of items $(i)$ and $(i)(\beta)$ follows from the equivalence of items $(ii)$ and $(iii)$ in Theorem \ref{tA.2}. 
\end{proof}

\begin{corollary} \lb{c2.5}
Let $m, n \in \bbN$, and assume that $F \colon \bbR^n \to \bbC^{m \times m}$ is bounded and continuous. Suppose that for all $1 \leq j,k \leq m$, $F_{j,k} \colon \bbR^n \to \bbC$ is positive semidefinite. Then for all $p \in [1,\infty)$, and all $t > 0$, the linear operator
\begin{equation}
(\exp(tF))(- i \nabla)\big|_{C_0^{\infty}(\bbR^n)^m} \colon \begin{cases} 
C_0^{\infty}(\bbR^n)^m \to C_{\infty}(\bbR^n)^m, \\
(f_1,\dots,f_m)^{\top} \mapsto \Big(\exp(tF) \big(f_1^{\wedge}, \dots, f_m^{\wedge}\big)^{\top}\Big)^{\vee},
\end{cases}
\end{equation}
extends boundedly to $(\exp(tF))(- i \nabla) \in \cB\big(L^p(\bbR^n)^m\big)$ satisfying 
\begin{equation}
(\exp(tF))(- i \nabla) \big(L^p(\bbR^n)_+^m\big) \subseteq L^p(\bbR^n)_+^m.  
\end{equation}
\end{corollary}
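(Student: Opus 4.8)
The plan is to deduce the statement from Theorem~\ref{t2.2}. Fix $t > 0$ and set $G := \exp(tF)$, by which I mean the pointwise (ordinary) matrix exponential, $G(x) = \exp(tF(x)) = \sum_{\ell=0}^{\infty} \f{t^\ell}{\ell!}\, F(x)^\ell$ for $x \in \bbR^n$; note that the operator displayed in the corollary is exactly $G(- i \nabla)\big|_{C_0^{\infty}(\bbR^n)^m}$ with this choice of $G$. If one verifies that $G$ is bounded, continuous, and that all of its entries $G_{j,k}$, $1 \leq j,k \leq m$, are positive semidefinite, then item $(i)$ of Theorem~\ref{t2.2} holds, so that items $(ii)$ and $(iv)$ there hold as well: $(\exp(tF))(- i \nabla)$ extends to an element of $\cB\big(L^1(\bbR^n)^m\big)$ and of $\cB\big(L^p(\bbR^n)^m\big)$ for every $p \in (1,\infty)$, in each case mapping $L^p(\bbR^n)_+^m$ into itself. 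Since $t > 0$ was arbitrary, this is precisely the asserted conclusion for all $p \in [1,\infty)$.

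First I would establish boundedness and continuity of $G$. Since $F$ is bounded, $M := \sup_{x \in \bbR^n} \norm{F(x)}_{\cB(\bbC^m)} < \infty$, where $\norm{\dott}_{\cB(\bbC^m)}$ denotes the operator norm on $m \times m$ matrices. By submultiplicativity, $\norm{F(x)^\ell}_{\cB(\bbC^m)} \leq M^\ell$ for every $x$ and every integer $\ell \geq 0$, so the exponential series converges in $\cB(\bbC^m)$ uniformly in $x \in \bbR^n$ by the Weierstrass $M$-test with majorant $\sum_{\ell=0}^{\infty} t^\ell M^\ell/\ell! = e^{tM}$. Hence $\norm{G(x)}_{\cB(\bbC^m)} \leq e^{tM}$ for all $x$, so $G$ is bounded, and $G$ is continuous as a uniform limit of the partial sums $\sum_{\ell=0}^{L} (t^\ell/\ell!)\, F(\dott)^\ell$, whose entries are polynomials in the continuous entries of $F$ and are therefore continuous.

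The substantive point is that every entry $G_{j,k}$ is positive semidefinite. The class of matrix-valued functions $\bbR^n \to \bbC^{m \times m}$ all of whose entries are positive semidefinite is closed under the ordinary matrix product: if $A, B$ belong to this class, then $(AB)_{j,k} = \sum_{l=1}^m A_{j,l} B_{l,k}$ is a finite sum of products of positive semidefinite functions, hence positive semidefinite by Lemma~\ref{l2.1} applied to each summand together with the elementary fact that a sum of positive semidefinite matrices is positive semidefinite. Induction on $\ell$, with base case $\ell = 0$ being the constant identity matrix (whose entries are the constants $0$ and $1$, both positive semidefinite), then shows that all entries of $F^\ell$ are positive semidefinite for every integer $\ell \geq 0$. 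Consequently, for any $N \in \bbN$ and any $x_1, \dots, x_N \in \bbR^n$, each partial sum $\big(\sum_{\ell=0}^{L} \f{t^\ell}{\ell!}\, (F(x_p - x_q)^\ell)_{j,k}\big)_{1 \leq p,q \leq N}$ is a positive semidefinite element of $\bbC^{N \times N}$; letting $L \to \infty$ and invoking the (uniform, hence pointwise) convergence established above, the limit $(G_{j,k}(x_p - x_q))_{1 \leq p,q \leq N}$ is positive semidefinite, because the cone of positive semidefinite matrices in $\bbC^{N \times N}$ is closed. Thus $G_{j,k}$ is positive semidefinite for all $1 \leq j,k \leq m$, Theorem~\ref{t2.2} applies, and the proof is complete.

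I do not anticipate a genuine obstacle here: the argument is essentially an application of Theorem~\ref{t2.2}. The only points requiring some (minor) care are the uniform convergence of the matrix exponential series, which simultaneously yields boundedness, continuity, and the limit needed in the last step, and the stability of positive semidefiniteness of the entries under finite matrix products — which is exactly where Lemma~\ref{l2.1} is used.
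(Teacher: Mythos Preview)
Your proof is correct and follows essentially the same route as the paper: reduce to Theorem~\ref{t2.2} with $G=\exp(tF)$, verify boundedness and continuity of $G$, and show each entry $G_{j,k}$ is positive semidefinite by using Lemma~\ref{l2.1} and induction to get that $(F^\ell)_{j,k}$ is positive semidefinite, then pass to the limit in the exponential series. You supply more detail on the uniform convergence and the limiting argument than the paper does, but the strategy is identical.
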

\begin{proof}
By the hypotheses on $F$, $\exp(t F) \colon \bbR^n \to \bbC^{m \times m}$ is bounded and continuous for all $t > 0$. By Theorem \ref{t2.2}, with $G = \exp(t F)$, $t > 0$, it suffices to prove that for all 
$1 \leq j,k \leq m$ and all $t > 0$, 
$\exp(t F)_{j,k} \colon \bbR^n \to \bbC$ is positive semidefinite. Combining Lemma \ref{l2.1} with an induction argument shows that $\big(F^{\ell}\big)_{j,k} \colon \bbR^n \to \bbC$ is positive semidefinite for all 
$\ell \in \bbN$ and all $1 \leq j,k \leq m$. Thus, also $\exp(t F)_{j,k} 
= \sum_{\ell = 0}^{\infty} \f{t^{\ell}}{\ell !} (F^{\ell})_{j,k}$ is positive semidefinite for all $t > 0$ and all 
$1 \leq j,k \leq m$. 
\end{proof} 

We conclude with an explicit illustration:

\begin{example} \lb{e2.6}
Let $n \in \bbN$, assume that $a \colon \bbR^n \to \bbR$ is continuous and bounded above, 
$b \geq 0$ is constant, and define $F_0 \colon \bbR^n \to \bbC^{2 \times 2}$ by
\begin{equation} 
F_0(x) = \begin{pmatrix} a(x) & b \\ b & a(x)\end{pmatrix}, \quad x \in \bbR^n.
\end{equation}
Then the following items $(i)$--$(iv)$ are equivalent: \\[1mm]
$(i)$ $a$ is conditionally positive semidefinite. \\[1mm]
$(ii)$ $F_0$ is conditionally positive semidefinite in the sense of Mlak \cite{Ml83}, that is, for all $N \in \bbN$, all $x_p \in \bbR^n$, and all $c_p \in \bbC^2$, $1 \leq p \leq N$, satisfying 
$\sum_{p=1}^N c_p = 0$, one has,
\begin{equation}
\sum_{p,q = 1}^N (c_p, F_0(x_p - x_q) c_q)_{\bbC^2} \geq 0.
\end{equation}
$(iii)$ For all $t > 0$, $\exp(t F_0) \colon \bbR^n \to \bbR^{2 \times 2}$ is positive semidefinite in the sense of 
\cite[Definition~2.4]{GP17} $($cf.\ Definition \ref{dA.4}$)$. \\[1mm] 
$(iv)$ For all $t > 0$, the linear operator 
\begin{equation}
(\exp(tF_0))(- i \nabla)\big|_{C_0^{\infty}(\bbR^n)^m} \colon \begin{cases} 
C_0^{\infty}(\bbR^n)^m \to C_{\infty}(\bbR^n)^m, \\
(f_1,\dots,f_m)^{\top} \mapsto \Big(\exp(tF_0) \big(f_1^{\wedge}, \dots, f_m^{\wedge}\big)^{\top}\Big)^{\vee},
\end{cases}     \lb{2.43}
\end{equation}
extends boundedly to $(\exp(tF_0))(- i \nabla) \in \cB\big(L^1(\bbR^n)^m\big)$ satisfying 
\begin{equation}
(\exp(tF_0))(- i \nabla) \big(L^1(\bbR^n)_+^m\big) \subseteq L^1(\bbR^n)_+^m.  
\end{equation}
$(v)$ There exists $p \in (1,\infty)$ such that for all $t > 0$, the linear operator \eqref{2.43} 
extends boundedly to $(\exp(tF_0))(- i \nabla) \in \cB\big(L^p(\bbR^n)^m\big)$ satisfying 
\begin{equation}
(\exp(tF_0))(- i \nabla) \big(L^p(\bbR^n)_+^m\big) \subseteq L^p(\bbR^n)_+^m.  
\end{equation}
$(vi)$ For all $p \in (1,\infty)$, and all $t > 0$, the linear operator \eqref{2.43} 
extends boundedly to $(\exp(tF_0))(- i \nabla) \in \cB\big(L^p(\bbR^n)^m\big)$ satisfying 
\begin{equation}
(\exp(tF_0))(- i \nabla) \big(L^p(\bbR^n)_+^m\big) \subseteq L^p(\bbR^n)_+^m.  
\end{equation}
\end{example}
\begin{proof}
We start by proving the equivalence of items $(i)$ and $(ii)$. One notes that for 
$c_p = (c_{p,1}, c_{p,2})^{\top} \in \bbC^2$, $1 \leq p \leq N$, $N \in \bbN$, 
\begin{align}
& \sum_{p,q = 1}^N (c_p, F_0(x_p - x_q) c_q)_{\bbC^2} 
= \sum_{p,q = 1}^N \ol{c_{p,1}} \, a(x_p - x_q) c_{q,1} 
+ \sum_{p,q = 1}^N \ol{c_{p,2}} \, a(x_p - x_q) c_{q,2}    \no \\
& \quad + b \bigg[\bigg(\sum_{p=1}^N \ol{c_{p,1}}\bigg) \bigg(\sum_{q=1}^N c_{q,2}\bigg)
+ \bigg(\sum_{p=1}^N \ol{c_{p,2}}\bigg) \bigg(\sum_{q=1}^N c_{q,1}\bigg)\bigg].   \lb{2.47} 
\end{align} 
If item $(i)$ holds, then for $c_p = (c_{p,1}, c_{p,2})^{\top} \in \bbC^2$, $1 \leq p \leq N$, with 
$\sum_{p=1}^N c_p = 0$, \eqref{2.47} implies 
\begin{align}
& \sum_{p,q}^N (c_p, F_0(x_p - x_q) c_q)_{\bbC^2} 
= \sum_{p,q=1}^N \ol{c_{p,1}} \, a(x_p - x_q) c_{q,1} 
+ \sum_{p,q=1}^N \ol{c_{p,2}} \, a(x_p - x_q) c_{q,2}  \no \\
& \quad \geq 0,
\end{align}
implying item $(ii)$. 

Next, suppose item $(ii)$ holds. Choose $d_p \in \bbC$, $1 \leq p \leq N$, $N \in \bbN$, with 
$\sum_{p=1}^N d_p = 0$. Let $c_p = (c_{p,1}, c_{p,2})^{\top} \in \bbC^2$, $1 \leq p \leq N$, be defined via
\begin{equation}
c_{p,1} = d_p, \quad c_{p,2} = 0, \quad 1 \leq p \leq N.
\end{equation}
Then $\sum_{p=1}^N c_p = 0$ and \eqref{2.47} yields 
\begin{equation}
0 \leq \sum_{p,q=1}^N (c_p, F_0(x_p - x_q) c_q)_{\bbC^2} 
= \sum_{p,q=1}^N \ol{d_p} \, a(x_p - x_q) d_q, 
\end{equation}
implying item $(i)$.

Next we employ the elementary matrix identity 
\begin{equation}
\exp\bigg(\begin{pmatrix} \alpha & \beta \\ \beta & \alpha \end{pmatrix}\bigg) 
= e^{\alpha} \begin{pmatrix} \cosh(\beta) & \sinh(\beta) \\ \sinh(\beta) & \cosh(\beta) \end{pmatrix}, 
\quad \alpha, \beta \in \bbR.    \lb{2.51} 
\end{equation}
Then, if $b=0$, the equivalence of items $(i)$, $(iv)$, $(v)$, and $(vi)$ follows from Corollary \ref{c2.4}. 

Next, suppose item $(i)$ holds and that $b =0$. We will show that then also item $(iii)$ holds: Let 
$x_p \in \bbR^n$, and let $c_p = (c_{p,1}, c_{p,2})^{\top} \in \bbC^2$, $1 \leq p \leq N$, $N \in \bbN$. 
Then  
\begin{equation}
\exp(t F_0) (x) = \begin{pmatrix} e^{t a(x)} & 0 \\ 0 & e^{t a(x)} \end{pmatrix}, \quad t > 0, \; x \in \bbR^n,
\end{equation} 
and hence by the the equivalence of items $(ii)$ and $(iii)$ in 
Theorem \ref{tA.2}, 
\begin{align}
& \sum_{p,q=1}^N (c_p, \exp(t F_0)(x_p - x_q) c_q)_{\bbC^2} 
= \sum_{p,q=1}^N \ol{c_{p,1}} \, e^{t a(x_p - x_q)} c_{q,1} +
\sum_{p,q=1}^N \ol{c_{p,2}} \, e^{t a(x_p - x_q)} c_{q,2}    \no \\
& \quad \geq 0.  
\end{align} 
Hence, item $(iii)$ follows from \cite[Lemma~2.5\,$(i)$]{GP17} (cf.\ 
Remark \ref{rA.5}). 

Now suppose item $(iii)$ holds and that $b=0$. We will show that then also item $(ii)$ holds: Let 
$x_p \in \bbR^n$, and let $c_p \in \bbC^2$, $1 \leq p \leq N$, $N \in \bbN$, with $\sum_{p=1}^N c_p = 0$.
Then (with $I_2$ the identity matrix in $\bbC^{2 \times 2}$),
\begin{align}
0 &\leq \lim_{t \downarrow 0} t^{-1} \big(c_p, \exp(t F_0)(x_p - x_q) c_q\big)_{\bbC^2}   \no \\
& = \lim_{t \downarrow 0} \sum_{p,q=1}^N 
\big(c_p, t^{-1}\big[\exp(t F_0)(x_p - x_q) - I_2\big]c_q\big)_{\bbC^2}   \no \\
&= \sum_{p,q=1}^N (c_p, F_0(x_p - x_q) c_q)_{\bbC^2},   \lb{2.54} 
\end{align}
and hence items $(i)$--$(vi)$ are equivalent if $b=0$. 

In the remainder of the proof we suppose that $b \neq 0$. We start by proving that item $(i)$ implies 
item $(iii)$. By inspection, the following matrix is nonnegative,  
\begin{equation}
\begin{pmatrix} \cosh(tb) & \sinh(tb) \\ \sinh(tb) & \cosh(tb) \end{pmatrix} \geq 0,
\end{equation}
as its eigenvalues $\cosh(tb) \pm \sinh(tb)$ are nonnegative. By item $(i)$ and the equivalence of items $(ii)$ and $(iii)$ in Theorem \ref{tA.2} as well as  Bochner's Theorems \ref{tA.3}), for all $t > 0$, there exists a finite, nonnegative Borel measure $\nu_t$ on $\bbR^n$ such that 
$e^{ta} = \nu_t^{\wedge}$. Thus, \eqref{2.51} implies 
\begin{align}
\exp(t F_0(x)) &= e^{t a(x)} \begin{pmatrix} \cosh(tb) & \sinh(tb) \\ \sinh(tb) & \cosh(tb) \end{pmatrix}  
\no \\
&= \bigg(\begin{pmatrix} \cosh(tb) & \sinh(tb) \\ \sinh(tb) & \cosh(tb) \end{pmatrix}  \nu_t\bigg)^{\wedge},
\lb{2.56} 
\end{align}
that is, $\exp(t F_0(x))$ is the Fourier transform of a nonnegative, finite, $\bbC^{2 \times 2}$-valued measure, and hence item $(iii)$ follows from Berberian's matrix-valued extension of Bochner's theorem, Theorem \ref{tA.6}. 

To verify that item $(iii)$ impies item $(ii)$, one notes that \eqref{2.54} remains valid if $b \neq 0$. 

By equation \eqref{2.56}, for all $t > 0$, the entries of $\exp(t F_0)$ are either $e^{ta} \cosh(tb)$ or 
$e^{ta} \sinh(tb)$, and hence for all $1 \leq j,k \leq 2$, $\exp(t F_0(\, \cdot \,))_{j,k}$ is positive 
semidefinite if and only if $e^{ta(\,\cdot \,)}$ is, and thus, by the equivalence of items $(ii)$ and $(iii)$ in Theorem \ref{tA.2}, if and only if 
$a(\, \cdot \,)$ is conditionally positive semidefinite. The equivalence of items $(i)$, $(iv)$--$(vi)$ 
now follows from Theorem \ref{t2.2}.
\end{proof}

\appendix
\section{Some Background Material} \lb{sA}
\renewcommand{\theequation}{A.\arabic{equation}}
\renewcommand{\thetheorem}{A.\arabic{theorem}}
\setcounter{theorem}{0} \setcounter{equation}{0}

We briefly recall the basic definitions of (conditionally) positive semidefinite 
functions $F \colon \bbR^n \to \bbC$, and state two classical results in this context; we also briefly 
hint at a matrix-valued extension of Bochner's theorem (for details we refer to \cite{GP17} and the extensive literature cited therein). Finally, we recall some results on Fourier multipliers in $L^1(\bbR^n)$ and $L^2(\bbR^n)$ (see \cite[Sect.~2.5]{Gr08} for a detailed exposition). 

\begin{definition} \lb{dA.1}
Let $m \in \bbN$, and $A \in \bbC^{m \times m}$, and 
suppose that $F \colon \bbR^n \to \bbC$, $n \in \bbN$. \\[1mm]
$(i)$ $A$ is called positive semidefinite, also denoted by $A \geq 0$, if 
\begin{equation}
(c, A c)_{\bbC^m} = \sum_{j,k=1}^m \ol{c_j} \, A_{j,k} c_k \geq 0 \, 
\text{ for all } \, c=(c_1,\dots,c_m)^{\top} \in \bbC^m. 
\end{equation} 
$(ii)$ $A=(A_{j,k})_{1\leq j,k \leq m} = A^* \in \bbC^{m \times m}$ is said to 
be conditionally positive semidefinite if 
\begin{equation}
(c, A c)_{\bbC^m} \geq 0 \, 
\text{ for all } \, c=(c_1,\dots,c_m)^{\top} \in \bbC^m, \, \text{ with } \, \sum_{j=1}^m c_j = 0. 
\end{equation} 
$(iii)$ $F$ is called positive semidefinite if for all $N \in \bbN$, $x_p \in \bbR^n$, 
$1 \leq p \leq N$, the matrix $(F(x_p - x_q))_{1 \leq p,q \leq N} \in \bbC^{N \times N}$ is 
positive semidefinite. \\[1mm] 
$(iv)$ $F$ is called conditionally positive semidefinite if for all $N \in \bbN$, $x_p \in \bbR^n$, 
$1 \leq p \leq N$, the 
matrix $(F(x_p - x_q))_{1 \leq p,q \leq N} \in \bbC^{N \times N}$ is conditionally positive 
semidefinite. \\[1mm]
$(v)$ Let $T \in \cB\big( L^2(\bbR^n)\big)$.~Then $T$ is called positivity preserving  
$($in $ L^2(\bbR^n)$$)$ if for any $0 \leq f \in L^2(\bbR^n)$ also $T f \geq 0$. \\[1mm]
\end{definition} 

In connection with Definition \ref{dA.1}\,$(iv)$ one can show that if $F$ is conditionally positive semidefinite, then (cf.\ \cite[Lemma~2.5\,$(iii)$]{GP17}, 
\cite[p.~12]{RS75})
\begin{equation} 
F(-x) = \ol{F(x)}, \quad x \in \bbR^n.      \lb{A.3} 
\end{equation} 
In addition, one observes that for $T$ to be positivity preserving it suffices to take $0 \leq f \in C_0^{\infty} (\bbR^n)$ in Definition \ref{dA.1}\,$(v)$. 

Given $F \in C(\bbR^n)$ and $F$ polynomially bounded, one can define 
\begin{equation}
F(- i \nabla) \colon \begin{cases} C_0^{\infty}(\bbR^n) \to L^2(\bbR^n),  \\ 
f \mapsto F(- i \nabla) f = \big(f^{\wedge} F\big)^{\vee}. \end{cases} 
\end{equation}
More generally, if $F \in L^1_{\loc}(\bbR^n)$, one introduces the maximally defined operator of 
multiplication by $F$ in $L^2(\bbR^n)$, denoted by $M_F$, by
\begin{equation}
(M_F f)(x) = F(x) f(x), \quad 
f \in \dom(M_F) = \big\{g \in L^2(\bbR^n) \, \big | \, F g \in L^2(\bbR^n)\big\}, 
\end{equation} 
and then defines $F(- i \nabla)$ as a normal operator in $L^2(\bbR^n)$ via
\begin{equation} 
F(- i \nabla) = \cF^{-1} M_F \cF 
\end{equation}
(cf.\ \eqref{1.15}, \eqref{1.16} and their unitary extensions to $L^2(\bbR^n)$).  

\begin{theorem} [cf., e.g., {\cite{HS78}}, {\cite{JS98}}, {\cite[Theorems~XIII.52 and XIII.53]{RS78}, {\cite{Sc38}}}] \lb{tA.2} ${}$ \\[1mm] 
Assume that $F \in C(\bbR^n)$ and there exists $c \in \bbR$ such that $\Re(F(x)) \leq c$. Then the following items $(i)$--$(iv)$ are equivalent: \\[1mm] 
$(i)$ For all $t > 0$, $\exp(t F(-i \nabla))$ is positivity preserving in $L^2(\bbR^n)$. \\[1mm] 
$(ii)$ For each $t > 0$, $e^{t F}$ is a positive semidefinite function. \\[1mm] 
$(iii)$ $F$ is conditionally positive semidefinite. \\[1mm]
$(iv)$ $($The Levy--Khintchine formula\,$)$. There exists, $\alpha \in \bbR$, $\beta \in \bbR^n$, 
$0 \leq A \in \bbC^{n \times n}$, and a nonnegative finite measure $\nu$ on $\bbR^n$, with 
$\nu(\{0\}) = 0$, such that 
\begin{align}
\begin{split} 
F(x) &= \alpha + i (\beta, x)_{\bbR^n} - (x, A x)_{\bbC^n}   \\ 
& \quad + \int_{\bbR^n} \bigg[\exp(i (x, y)_{\bbR^n}) -1 - 
\frac{i (x, y)_{\bbR^n}}{1 + \|y\|_{\bbR^n}^2} \bigg] \f{1 + \|y\|_{\bbR^n}^2}{\|y\|_{\bbR^n}^2} \, d \nu(y),  
\quad x \in \bbR^n.
\end{split} 
\end{align}
\end{theorem}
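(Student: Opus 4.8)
The plan is to close the loop of equivalences by treating $(ii)\Leftrightarrow(iii)$, $(i)\Leftrightarrow(ii)$, and $(iv)\Rightarrow(iii)$ with soft arguments, and then to deduce the remaining direction $(iii)\Rightarrow(iv)$ from the classical L\'evy--Khintchine representation of convolution semigroups, which will be the only genuinely hard step. The two workhorses throughout will be Schur's theorem on Hadamard products (Lemma \ref{l2.1}) and Bochner's theorem (Theorem \ref{tA.3}), together with the truncation/continuity device already developed in the proof of Theorem \ref{t2.2}.

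For $(iii)\Rightarrow(ii)$ I would argue as in Schoenberg's theorem. Fixing $t>0$ and points $x_1,\dots,x_N\in\bbR^n$, adjoining the extra point $x_0=0$, and inserting, for given $c_1,\dots,c_N$, the coefficient $c_0:=-\sum_{p=1}^N c_p$ into the conditional-positivity inequality for the matrix $(F(x_p-x_q))_{0\le p,q\le N}$, then using \eqref{A.3} (which in particular makes $F(0)$ real), one finds after expansion that $\widetilde K(x,y):=F(x-y)-F(x)-\ol{F(y)}+F(0)$ is a positive semidefinite kernel on $\bbR^n\times\bbR^n$. The pointwise identity $e^{tF(x-y)}=e^{-tF(0)}\,e^{tF(x)}\,\ol{e^{tF(y)}}\,e^{t\widetilde K(x,y)}$ then displays $(e^{tF(x_p-x_q)})_{1\le p,q\le N}$ as the positive scalar $e^{-tF(0)}$ times the Hadamard product of the rank-one positive semidefinite matrix $(e^{tF(x_p)}\,\ol{e^{tF(x_q)}})_{1\le p,q\le N}$ with the Hadamard exponential $\exp_H\!\big(t\,(\widetilde K(x_p,x_q))_{1\le p,q\le N}\big)$; since the Hadamard exponential of a positive semidefinite matrix is positive semidefinite (Lemma \ref{l2.1}, iterated as in the proof of Corollary \ref{c2.5}), a final application of Lemma \ref{l2.1} gives $(e^{tF(x_p-x_q)})_{1\le p,q\le N}\ge0$, i.e., $e^{tF}$ is positive semidefinite. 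For the converse $(ii)\Rightarrow(iii)$ I would note that, for $\sum_p c_p=0$ and hence $\sum_{p,q}\ol{c_p}c_q=0$, one has $\sum_{p,q}\ol{c_p}\,c_q\,t^{-1}\big(e^{tF(x_p-x_q)}-1\big)=t^{-1}\sum_{p,q}\ol{c_p}\,c_q\,e^{tF(x_p-x_q)}\ge0$; letting $t\downarrow0$ gives conditional positive semidefiniteness of $F$, and differentiating $e^{tF(-x)}=e^{t\ol{F(x)}}$ at $t=0$ recovers $F(-x)=\ol{F(x)}$.

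For $(ii)\Rightarrow(i)$ I would use that $e^{tF}$ is continuous and bounded ($|e^{tF}|\le e^{tc}$), invoke Bochner's theorem to write $e^{tF}=\mu_t^\wedge$ for a finite nonnegative Borel measure $\mu_t$ on $\bbR^n$, and observe that $e^{tF(-i\nabla)}f=(f^\wedge e^{tF})^\vee$ equals, up to a normalizing constant, the convolution of $\mu_t$ with $f$, which maps nonnegative functions in $L^2(\bbR^n)$ to nonnegative functions, i.e., $(i)$ holds. For the reverse $(i)\Rightarrow(ii)$ I would take the approximate identity $\phi_\varepsilon$ of \eqref{2.15}, set $h_\varepsilon:=e^{tF(-i\nabla)}\phi_\varepsilon$ (so $h_\varepsilon\ge0$ by positivity preservation, $h_\varepsilon\in C_\infty(\bbR^n)\cap L^2(\bbR^n)$, and $h_\varepsilon^\wedge=\phi_\varepsilon^\wedge e^{tF}$), and then run the argument around \eqref{2.26} verbatim: for each $R>0$, $h_\varepsilon\chi_{B_n(0,R)}\in L^1(\bbR^n)_+$, so $(h_\varepsilon\chi_{B_n(0,R)})^\wedge$ is positive semidefinite, and passing to the limit $R\uparrow\infty$ in $L^2(\bbR^n)$ (with the small point-perturbation to avoid the exceptional null set, using continuity of $h_\varepsilon^\wedge$) shows $\phi_\varepsilon^\wedge e^{tF}$ is positive semidefinite; letting $\varepsilon\downarrow0$ and using $\phi_\varepsilon^\wedge\to1$ yields that $e^{tF}$ is positive semidefinite. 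For $(iv)\Rightarrow(iii)$ I would verify conditional positive semidefiniteness of the L\'evy--Khintchine integrand term by term: a real constant $\alpha$ and the purely imaginary linear term $i(\beta,x)_{\bbR^n}$ produce quadratic forms that vanish on $\{\sum_p c_p=0\}$; for $0\le A$, the form $\sum_{p,q}\ol{c_p}c_q\,(x_p-x_q,A(x_p-x_q))_{\bbC^n}$ reduces on $\{\sum_p c_p=0\}$ to $-2(v,Av)_{\bbC^n}\le0$ with $v=\sum_q c_q x_q$, so $-(x,Ax)_{\bbC^n}$ is conditionally positive semidefinite; for each fixed $y$ the character $x\mapsto e^{i(x,y)_{\bbR^n}}$ is positive semidefinite (a rank-one positive semidefinite matrix), and the remaining two pieces of the bracket are again constant plus linear; and, since the bracket is $\Oh(\|y\|_{\bbR^n}^2)$ near $y=0$ so that the integrand is bounded uniformly for $x$ in compacta, integration against the finite nonnegative measure $\nu$ with the indicated positive weight preserves the property.

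The hard part will be $(iii)\Rightarrow(iv)$, that is, the actual construction of $\alpha$, $\beta$, $A$, and $\nu$. By the equivalences already in hand, $(iii)$ yields a vaguely continuous convolution semigroup $(\mu_t)_{t>0}$ of finite nonnegative Borel measures with $\mu_t^\wedge=e^{tF}$ and $\mu_t\to\delta$ vaguely as $t\downarrow0$, and the assertion is precisely the L\'evy--Khintchine description of its generator $F$: one recovers the L\'evy measure as a vague limit of $t^{-1}\mu_t$ restricted away from the origin (which requires a tightness/compactness argument), the Gaussian part $A$ and the drift $\beta$ from the quadratic and linear behaviour of $F$ near the origin, and the compensator $-i(x,y)_{\bbR^n}(1+\|y\|_{\bbR^n}^2)^{-1}$ together with the weight $(1+\|y\|_{\bbR^n}^2)\|y\|_{\bbR^n}^{-2}$ is exactly what makes $\nu$ finite and the $y$-integral convergent. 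I would invoke this last step from the classical literature (cf.\ \cite[Theorems~XIII.52 and XIII.53]{RS78}) rather than reproduce the construction; every other implication above reduces to Schur's theorem, Bochner's theorem, and the $L^2$-truncation argument already available in the paper.
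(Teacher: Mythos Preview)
Your proposal is correct, but you should be aware that the paper does not actually prove Theorem~\ref{tA.2}; it is stated as a classical result with references to \cite{HS78}, \cite{JS98}, \cite[Theorems~XIII.52 and XIII.53]{RS78}, and \cite{Sc38}. The only proof content the paper supplies is the short paragraph immediately following the statement, which addresses just the equivalence $(ii)\Leftrightarrow(iii)$: it notes that $(ii)\Rightarrow(iii)$ follows by differentiating $\exp_H\big(t(F(x_p-x_q))_{1\le p,q\le N}\big)$ at $t=0$, and that $(iii)\Rightarrow(ii)$ is a consequence of \cite[Theorem~6.3.6]{HJ94}.

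Your treatment of $(ii)\Rightarrow(iii)$ is exactly the paper's differentiation remark. Your Schoenberg-style argument for $(iii)\Rightarrow(ii)$ via the auxiliary positive semidefinite kernel $\widetilde K(x,y)=F(x-y)-F(x)-\ol{F(y)}+F(0)$ and the Hadamard-exponential factorization is precisely the content behind the Horn--Johnson citation the paper invokes, so there is no substantive difference there. For the remaining implications $(i)\Leftrightarrow(ii)$ and $(iv)\Leftrightarrow(iii)$, you go well beyond the paper: your $(ii)\Rightarrow(i)$ via Bochner plus convolution, your $(i)\Rightarrow(ii)$ by recycling the truncation device from the proof of Theorem~\ref{t2.2}, and your term-by-term verification of $(iv)\Rightarrow(iii)$ are all sound and self-contained, whereas the paper simply defers to the cited references. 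Your decision to quote the classical literature for the hard direction $(iii)\Rightarrow(iv)$ (the actual L\'evy--Khintchine construction) matches what the paper does for the entire theorem.
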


Just for completeness (as it is used repeatedly in the bulk of this paper), we recall that item $(ii)$ above implies item $(iii)$ by differentiating 
$exp_H\big(t(F(x_p-x_q))_{1\leq p,q \leq N}\big)$, $x_p \in \bbR^n$, $1\leq p \leq N$, $N \in \bbN$, at $t = 0$.  Conversely, that item $(iii)$ implies item $(ii)$ is a consequence of \cite[Theorem~6.3.6]{HJ94}.

We continue by recalling Bochner's classical theorem \cite{Bo33}:

\begin{theorem} $($Bochner's Theorem, cf., e.g., {\cite[Sect.~5.4]{Ak65}}, {\cite[Theorem~2.7]{JS98}},
{\cite[p.~13]{RS75}}, {\cite[p.~46]{SSV12}}$)$. \lb{tA.3} 
${}$ \\ 
Assume that $F \in C(\bbR^n)$. Then the following items $(i)$ and 
$(ii)$ are equivalent: \\[1mm] 
$(i)$ $F$ is positive semidefinite. \\[1mm]
$(ii)$ There exists a nonnegative finite measure $\mu$ on $\bbR^n$ such that 
\begin{equation}
 F(x) = \mu^{\wedge}(x), \quad x \in \bbR^n.    \lb{A.8} 
\end{equation} 
In addition, if one of conditions $(i)$ or $(ii)$ holds, then
\begin{equation}
F(-x) = \ol{F(x)}, \quad |F(x)| \leq |F(0)|, \quad x \in \bbR^n, 
\end{equation} 
in particular, $F$ is bounded on $\bbR^n$. 
\end{theorem}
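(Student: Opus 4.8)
The plan is to prove the two implications separately: $(ii)\Rightarrow(i)$ is elementary, while $(i)\Rightarrow(ii)$ proceeds by Gaussian regularization followed by a weak-$*$ compactness argument for finite measures. First I would dispose of $(ii)\Rightarrow(i)$ together with the supplementary relations. If $F=\mu^{\wedge}$ with $\mu$ a nonnegative finite Borel measure, then for all $N\in\bbN$, $x_p\in\bbR^n$, $c_p\in\bbC$, $1\le p\le N$,
\[
\sum_{p,q=1}^N \ol{c_p}\,c_q\,F(x_p-x_q)
=(2\pi)^{-n/2}\int_{\bbR^n}\Big|\sum_{p=1}^N c_p\,e^{i(x_p\cdot y)}\Big|^2 d\mu(y)\ge 0,
\]
so $F$ is positive semidefinite; moreover $F(0)=(2\pi)^{-n/2}\mu(\bbR^n)\ge 0$, $|F(x)|\le(2\pi)^{-n/2}\mu(\bbR^n)=F(0)$, and conjugating the integrand yields $F(-x)=\ol{F(x)}$. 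I would also record here that the last two relations follow from $(i)$ alone, by applying positive semidefiniteness to the $2\times 2$ matrix $(F(x_p-x_q))$ with $x_1=0$, $x_2=x$; in particular $(i)$ already forces $F$ to be bounded, which is used below.

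For the converse, fix $\e>0$ and put $G_\e(x):=e^{-\e\|x\|_{\bbR^n}^2/2}$. A direct Gaussian computation gives $G_\e^{\wedge}(y)=\e^{-n/2}e^{-\|y\|_{\bbR^n}^2/(2\e)}$, a nonnegative integrable even function, so by Fourier inversion $G_\e$ is the Fourier transform of the nonnegative finite measure $G_\e^{\wedge}(y)\,d^ny$, whence the already-proved direction $(ii)\Rightarrow(i)$ shows that $G_\e$ is positive semidefinite. Since $F$ is bounded, Lemma~\ref{l2.1} applied to $F$ and $G_\e$ shows that $F_\e:=F\cdot G_\e$ is continuous, bounded, positive semidefinite, and — by virtue of the Gaussian factor — belongs to $L^1(\bbR^n)$. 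The crucial point is that a continuous, integrable, positive semidefinite function $\phi$ has a pointwise nonnegative Fourier transform: approximating $\int_{\bbR^n}\!\int_{\bbR^n}\phi(x-x')\,\ol{g(x)}\,g(x')\,d^nx\,d^nx'$ by Riemann sums and invoking positive semidefiniteness shows this double integral is $\ge 0$ for every $g\in C_0^{\infty}(\bbR^n)$; choosing $g(x)=e^{-i(x\cdot y)}\psi_\d(x)$ with $\psi_\d$ an approximate identity and letting $\d\downarrow 0$ then yields $\phi^{\wedge}(y)\ge 0$ for all $y\in\bbR^n$. Applying this to $\phi=F_\e$, and using that $F_\e^{\wedge}\ge 0$ with $\int_{\bbR^n}F_\e^{\wedge}(y)\,d^ny=(2\pi)^{n/2}F_\e(0)=(2\pi)^{n/2}F(0)<\infty$ (so that Fourier inversion is legitimate), one obtains
\[
F_\e=\mu_\e^{\wedge},\qquad d\mu_\e(y):=F_\e^{\wedge}(-y)\,d^ny\ \ (\ge 0),\qquad \mu_\e(\bbR^n)=(2\pi)^{n/2}F(0),
\]
the total mass being independent of $\e$.

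Finally I would let $\e\downarrow 0$. Since $\sup_{\e\in(0,1)}\mu_\e(\bbR^n)<\infty$, weak-$*$ compactness yields a sequence $\e_k\downarrow 0$ with $\mu_{\e_k}$ converging weak-$*$ to a nonnegative finite Borel measure $\mu$. To conclude $F=\mu^{\wedge}$ one must pass to the limit in $\mu_{\e_k}^{\wedge}(x)=(2\pi)^{-n/2}\int e^{-i(x\cdot y)}\,d\mu_{\e_k}(y)$, i.e.\ test against the bounded continuous function $y\mapsto e^{-i(x\cdot y)}$, and this is the one step requiring genuine care: one must rule out escape of mass to infinity, that is, establish tightness of $\{\mu_\e\}$. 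This follows from the continuity of $F$ at the origin via the standard estimate
\[
(2\d)^{-n}\int_{[-\d,\d]^n}\big(F_\e(0)-\Re F_\e(x)\big)\,d^nx
=(2\pi)^{-n/2}\int_{\bbR^n}\Big(1-\prod_{j=1}^n\tfrac{\sin(\d y_j)}{\d y_j}\Big)\,d\mu_\e(y),
\]
whose left-hand side is at most $\sup_{x\in[-\d,\d]^n}|F_\e(x)-F_\e(0)|\le\|F\|_{\infty}\sup_{x\in[-\d,\d]^n}|1-G_\e(x)|+\sup_{x\in[-\d,\d]^n}|F(x)-F(0)|\to 0$ as $\d\downarrow 0$, uniformly in $\e\in(0,1)$ (using $|1-G_\e(x)|\le\e\|x\|_{\bbR^n}^2/2$ on $[-\d,\d]^n$ and continuity of $F$ at $0$), while the integrand on the right is bounded below by a fixed positive constant on $\{\|y\|_{\bbR^n}\ge C/\d\}$; this forces $\sup_{\e}\mu_\e(\{\|y\|_{\bbR^n}\ge R\})\to 0$ as $R\to\infty$. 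Granted tightness, $\mu_{\e_k}\to\mu$ also against bounded continuous test functions, so $\mu^{\wedge}(x)=\lim_k\mu_{\e_k}^{\wedge}(x)=\lim_k F_{\e_k}(x)=\lim_k F(x)G_{\e_k}(x)=F(x)$ for every $x\in\bbR^n$, which completes the proof. I expect the tightness step to be the main obstacle; the remainder is the Riemann-sum passage from sums to integrals and bookkeeping of Fourier-inversion constants. (A ``softer'' alternative avoiding the Gaussian altogether is the Gelfand--Naimark--Segal route: the kernel $(x,x')\mapsto F(x-x')$ is positive semidefinite, hence generates a Hilbert space carrying a strongly continuous unitary representation of $\bbR^n$, and the Stone--Naimark--Ambrose--Godement theorem produces a projection-valued measure whose diagonal with respect to the cyclic vector is the desired $\mu$; I would nonetheless favour the regularization argument, which presupposes less machinery.)
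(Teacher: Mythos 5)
The paper offers no proof of Theorem \ref{tA.3} to compare against: it is stated in Appendix \ref{sA} as classical background, with the argument delegated entirely to the cited references. What you supply is a self-contained proof by the classical Gaussian-regularization route, and in outline it is sound: the computation for $(ii)\Rightarrow(i)$ and the supplementary relations $F(-x)=\ol{F(x)}$, $|F(x)|\le F(0)$ are correct; the reduction to the integrable case via $F_\e=FG_\e$ together with Lemma \ref{l2.1} is exactly right; the inversion bookkeeping is fine (one should first justify $F_\e^{\wedge}\in L^1(\bbR^n)$, e.g.\ by testing $F_\e^{\wedge}\ge0$ against $e^{-\d\|y\|_{\bbR^n}^2/2}$ and using monotone convergence as $\d\downarrow0$, but you flag that inversion needs legitimizing); and the sinc-kernel tightness estimate is precisely the standard way to upgrade weak-$*$ convergence on $C_\infty(\bbR^n)$ to convergence of characteristic functions. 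You also correctly isolate tightness as the genuinely delicate point.

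The one step that fails as literally written is the lemma that a continuous, integrable, positive semidefinite $\phi$ has $\phi^{\wedge}\ge0$. You take $g(x)=e^{-i(x\cdot y)}\psi_\d(x)$ with $\psi_\d$ an \emph{approximate identity} and let $\d\downarrow0$. But substituting $u=x-x'$ gives $\iint\phi(x-x')\,\ol{g(x)}\,g(x')\,d^nx\,d^nx'=\int_{\bbR^n}\phi(u)e^{i(u\cdot y)}h_\d(u)\,d^nu$ with $h_\d(u)=\int\psi_\d(u+x')\psi_\d(x')\,d^nx'$; if $\psi_\d$ concentrates at the origin with $\int\psi_\d=1$, then $h_\d$ is again an approximate identity and the limit is $\phi(0)\ge0$, not $(2\pi)^{n/2}\phi^{\wedge}(-y)\ge0$. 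The correct choice is the \emph{opposite} scaling: take $\psi_T=(2T)^{-n/2}\chi_{[-T,T]^n}$, normalized in $L^2$ and spreading out, so that $h_T(u)=\prod_{j=1}^n\bigl(1-|u_j|/(2T)\bigr)_+\nearrow1$ pointwise and boundedly, and let $T\uparrow\infty$ using dominated convergence (this is the classical $(2T)^{-n}\int_{Q_T}\int_{Q_T}$ averaging argument). The fix is one line and the lemma is standard, but as stated the step proves the wrong inequality, so you should repair it before relying on it.
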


Next, we turn to the finite-dimensional special case of an infinite-dimensional extension of Bochner's theorem in connection with locally compact Abelian  groups due to Berberian \cite{Be66} (see also \cite{FH72}, \cite{FK91}, \cite{Ml83}, \cite{vW68}):

\begin{definition} \lb{dA.4}
Let $F \colon \bbR^n \to \bbC^{m \times m}$, $m, n \in \bbN$. Then $F$ is called 
positive semidefinite if for all $N \in \bbN$, $x_p \in \bbR^n$, 
$1 \leq p \leq N$, the block matrix $(F(x_p - x_q))_{1 \leq p,q \leq N} \in \bbC^{mN \times mN}$ is 
nonnegative.
\end{definition}

\begin{remark} \lb{rA.5}
By \cite[Lemma~2.5\,$(i)$]{GP17}, $F\colon \bbR^n \to \bbC^{m \times m}$ is positive semidefinite if and only if for all $N \in \bbN$, $x_p \in \bbR^n$, $c_p \in \bbC^m$, 1$ \leq p \leq N$, one has 
\begin{equation}
\sum_{p,q = 1}^N (c_p, F(x_p - x_q) c_q)_{\bbC^m} \geq  0. 
\end{equation}
\end{remark}

\begin{theorem} [{\cite[p~178, Theorem~3 and Corollary on p.~177]{Be66}}] \lb{tA.6} ${}$ \\[1mm]
Assume that $F \in C(\bbR^n, \bbC^{m \times m}) \cap L^{\infty} (\bbR^n, \bbC^{m \times m})$, 
$m \in \bbN$. Then the following items $(i)$ and $(ii)$ are equivalent: \\[1mm]
$(i)$ $F$ is positive semidefinite. \\[1mm]
$(ii)$ There exists a nonnegative measure $\mu \in \cM(\bbR^n, \bbC^{m \times m})$ such that 
\begin{equation}
 F(x) = \mu^{\wedge}(x), \quad x \in \bbR^n.    \lb{A.7} 
\end{equation} 
In addition, if one of conditions $(i)$ or $(ii)$ holds, then
\begin{equation}
F(-x) = F(x)^*, \quad \|F(x)\|_{\cB(\bbC^m)} \leq \|F(0)\|_{\cB(\bbC^m)}, \quad x \in \bbR^n. 
\lb{A.11} 
\end{equation} 
\end{theorem}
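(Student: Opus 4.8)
The statement is the finite-dimensional, $\bbR^n$-valued special case of Berberian's theorem, and the plan is to reduce it to the scalar Bochner theorem (Theorem~\ref{tA.3}) by scalarization and polarization — so in principle one could also simply invoke \cite{Be66}. I would first dispose of (ii)$\Rightarrow$(i): given $0 \le \mu \in \cM(\bbR^n,\bbC^{m\times m})$ with $F = \mu^{\wedge}$, fix $N \in \bbN$, points $x_1,\dots,x_N \in \bbR^n$, and vectors $c_1,\dots,c_N \in \bbC^m$; substituting the integral representation of $\mu^{\wedge}$ and using $e^{-i((x_p-x_q)\cdot y)} = \ol{e^{i(x_p\cdot y)}}\,e^{i(x_q\cdot y)}$ one obtains
\[
\sum_{p,q=1}^N (c_p, F(x_p-x_q) c_q)_{\bbC^m} = (2\pi)^{-n/2}\int_{\bbR^n}\big(v(y), d\mu(y)\,v(y)\big),
\]
with $v(y) := \sum_{p=1}^N e^{i(x_p\cdot y)} c_p$. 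Since $\mu(E) \ge 0$ for every Borel set $E$, the integrand is nonnegative (approximate $v$ by simple $\bbC^m$-valued functions), so the left-hand side is $\ge 0$, which by Remark~\ref{rA.5} is exactly (i).

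For (i)$\Rightarrow$(ii) I would scalarize. For $c \in \bbC^m$ set $F_c(x) := (c, F(x) c)_{\bbC^m}$, a continuous function on $\bbR^n$; choosing $c_p := a_p c$ with $a_p \in \bbC$ in the hypothesis gives $(F_c(x_p-x_q))_{1\le p,q\le N} \ge 0$, so $F_c$ is positive semidefinite and Theorem~\ref{tA.3} furnishes a finite nonnegative measure $\mu_c$ with $F_c = \mu_c^{\wedge}$. For $c,d \in \bbC^m$ put $\mu_{c,d} := \f14\sum_{k=0}^3 i^{-k}\mu_{c+i^k d}$; polarizing the sesquilinear form $(c,F(x)d)_{\bbC^m}$ shows $\mu_{c,d}^{\wedge} = (c, F(\cdot)d)_{\bbC^m}$. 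Since the Fourier transform is injective on finite complex measures, $\mu_{c,d}$ is the unique finite measure with this transform, whence $(c,d)\mapsto \mu_{c,d}(E)$ is sesquilinear for each fixed Borel $E$, with $\mu_{c,c}(E) = \mu_c(E)\ge 0$. Therefore there is a unique nonnegative matrix $\mu(E)\in\bbC^{m\times m}$ with $(c,\mu(E)d)_{\bbC^m} = \mu_{c,d}(E)$; countable additivity of the entries $\mu_{j,k} = \mu_{e_j,e_k}$ and finiteness of their total variations make $\mu$ a nonnegative element of $\cM(\bbR^n,\bbC^{m\times m})$, and pairing $\mu^{\wedge}(x)$ with $e_j, e_k$ recovers $F(x) = \mu^{\wedge}(x)$, i.e.\ (ii).

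For the two supplementary identities, applying (i) with $N=2$ and nodes $\{x,0\}$ makes the block matrix $\left(\begin{smallmatrix} F(0) & F(x)\\ F(-x) & F(0)\end{smallmatrix}\right)\in\bbC^{2m\times 2m}$ nonnegative, hence self-adjoint, which forces $F(-x) = F(x)^*$ and $F(0)\ge 0$. The same block positivity yields, via Cauchy--Schwarz for the associated nonnegative form, the bound $|(\xi,F(x)\eta)_{\bbC^m}|^2 \le (\xi,F(0)\xi)_{\bbC^m}(\eta,F(0)\eta)_{\bbC^m}$ for all $\xi,\eta\in\bbC^m$; taking the supremum over $\|\xi\|_{\bbC^m} = \|\eta\|_{\bbC^m} = 1$ and using $\|F(0)\|_{\cB(\bbC^m)} = \sup_{\|\xi\|_{\bbC^m}=1}(\xi,F(0)\xi)_{\bbC^m}$ (valid since $F(0)\ge 0$) gives $\|F(x)\|_{\cB(\bbC^m)}\le \|F(0)\|_{\cB(\bbC^m)}$.

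The main obstacle is the assembly step inside (i)$\Rightarrow$(ii): upgrading the family $\{\mu_c\}_{c\in\bbC^m}$ of scalar Bochner measures to a single matrix-valued measure. The points requiring care are that polarization together with injectivity of the Fourier transform genuinely makes $E\mapsto\mu_{c,d}(E)$ sesquilinear and mutually compatible for all $c,d$ at once, and that the resulting set function $\mu$ is a bona fide countably additive $\bbC^{m\times m}$-valued measure of finite total variation with $\mu(E)\ge 0$ for every Borel $E$. This is positivity-and-linearity bookkeeping rather than hard analysis, but it is the part that has to be set up correctly.
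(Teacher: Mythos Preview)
The paper does not supply its own proof of Theorem~\ref{tA.6}: the result is stated in the appendix purely as background and is attributed to Berberian \cite{Be66}, so there is no in-paper argument to compare against. Your proposal is therefore not competing with anything the authors wrote; it is a self-contained derivation of a cited result.

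That said, your argument is sound and is in fact the standard route to the finite-dimensional case: scalarize via $F_c(x) = (c,F(x)c)_{\bbC^m}$, invoke the scalar Bochner theorem (Theorem~\ref{tA.3}), polarize to obtain the off-diagonal measures, and reassemble into a nonnegative $\bbC^{m\times m}$-valued measure using injectivity of the Fourier transform on finite measures. The direction $(ii)\Rightarrow(i)$ and the supplementary identities via the $2m\times 2m$ block matrix at nodes $\{x,0\}$ are handled correctly. The only place requiring genuine care, as you yourself flag, is verifying that $(c,d)\mapsto\mu_{c,d}(E)$ is sesquilinear for \emph{every} Borel $E$ simultaneously (not just that the Fourier transforms match), so that a single matrix $\mu(E)$ exists; injectivity of the Fourier transform on finite complex measures does give this, since the polarization identity forces $\mu_{\alpha c+\beta c',d}$ and $\ol\alpha\,\mu_{c,d}+\ol\beta\,\mu_{c',d}$ to have identical transforms and hence to coincide as measures. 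With that point secured, the proof is complete.
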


Finally, we briefly turn to Fourier multipliers.

\begin{definition} \lb{dA.6}
Let $p, q \in [1,\infty) \cup \{\infty\}$ The set $\cM^{p,q}(\bbR^n)$ denotes the Banach space of 
all bounded linear operators from $L^p(\bbR^n)$ to $L^q(\bbR^n)$ that commute with translations. The  
norm of $T \in \cM^{p,q}(\bbR^n)$ is given by the operator norm, 
\begin{equation}
\|T\|_{p,q} := \|T\|_{\cB(L^p(\bbR^n), L^q(\bbR^n))}.
\end{equation}  
\end{definition}

We note that bounded convolution operators from $L^p(\bbR^n)$ to $L^q(\bbR^n)$ clearly are commuting with translations (i.e., are translation invariant); that the converse is valid as well is proved 
in \cite[Theorem~2.5.2]{Gr08}.

Given a complex measure $\mu$ on $\bbR^n$, the total variation of $\mu$ is defined by $|\mu|(\bbR^n)$ and the norm of $\mu$ is introduced as $\|\mu\| = |\mu|(\bbR^n)$. Given a $\mu$-integrable $f \colon \bbR^n \to \bbC$, one defines the convolution of $f$ and $\mu$ by  
\begin{equation}
f * \mu \colon \begin{cases} \bbR^n \to \bbC, \\ 
x \mapsto (f * \mu)(x) = \int_{\bbR^n} f(x-y) \, d\mu(y), 
\end{cases} \quad x \in \bbR^n.    \lb{A.9}
\end{equation}
In addition, one can introduce the associated convolution operator 
$T_{\mu} \in \cB\big( L^p(\bbR^n)\big)$, $p \in [1,\infty)$,  by
\begin{equation}
T_{\mu} f = f * \mu, \quad f \in L^p(\bbR^n).
\end{equation}
Similarly, if $u \in \cS'(\bbR^n)$ is a tempered distribution, the associated convolution operator  
$T_u$ is defined via
\begin{equation}
T_{u} f = f * u, \quad f \in \cS(\bbR^n).
\end{equation}

In the cases $p=q = 1,2$ one has the following well-known results:

\begin{theorem} 
[cf., e.g., {\cite[Theorem~2.5.8 and Theorem~2.5.10]{Gr08}}] \lb{tA.7} ${}$ \\[1mm] 
$(i)$ $T \in \cM^{1,1}(\bbR^n)$ if and only if $T = T_{\mu}$ for some $($finite$)$ complex measure $\mu$. In this case 
\begin{equation}
\|T\|_{1,1} = \|T_{\mu}\|_{\cB(L^1(\bbR^n))} = |\mu|(\bbR^n). 
\end{equation} 
$(ii)$ $T \in \cM^{2,2}(\bbR^n)$ if and only if $T = T_{u}$ for some $u  \in \cS'(\bbR^n)$, whose Fourier transform $u^{\wedge}$ lies in $L^{\infty}(\bbR^n)$. In this case 
\begin{equation}
\|T\|_{2,2} = \|T_u\|_{\cB(L^2(\bbR^n))} = \big\|u^{\wedge}\big\|_{L^{\infty}(\bbR^n)}. 
\end{equation} 
\end{theorem}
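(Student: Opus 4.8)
The plan is to prove, in each of parts $(i)$ and $(ii)$, one ``easy'' implication together with one substantive one, and then to read off the operator norms. In both parts the easy direction asserts that a convolution operator $T_u$ of the stated type belongs to the relevant multiplier class: translation invariance is immediate from $\tau_h(f * u) = (\tau_h f) * u$, while boundedness follows in part $(i)$ from Young's inequality $\norm{f * \mu}_{L^1(\bbR^n)} \le \abs{\mu}(\bbR^n)\,\norm{f}_{L^1(\bbR^n)}$, and in part $(ii)$ from Plancherel's theorem together with the convolution theorem (convolution with $u$ acts on the Fourier side as multiplication by $u^\wedge$, up to the power of $2\pi$ fixed by the Fourier normalization used in \cite{Gr08}), giving $\norm{T_u f}_{L^2(\bbR^n)} \le \norm{u^\wedge}_{L^\infty(\bbR^n)}\,\norm{f}_{L^2(\bbR^n)}$. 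The real work is the converse in each part: reconstructing $\mu$, respectively $u$, from an abstract $T$, and establishing the norm identities.

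For part $(i)$, I would fix a nonnegative approximate identity $\{\phi_\varepsilon\}_{\varepsilon \in (0,1)} \subset C_0^\infty(\bbR^n)$ with $\norm{\phi_\varepsilon}_{L^1(\bbR^n)} = 1$ and $\supp(\phi_\varepsilon) \subseteq B_n(0,\varepsilon)$, and apply $T \in \cM^{1,1}(\bbR^n)$ to it. The family $\{T\phi_\varepsilon\} \subset L^1(\bbR^n)$ is bounded by $\norm{T}_{1,1}$; viewing $L^1(\bbR^n)$ isometrically inside the space of finite complex Borel measures on $\bbR^n$, which is the dual of the separable space $C_\infty(\bbR^n)$, the Banach--Alaoglu theorem yields $\varepsilon_k \downarrow 0$ and a finite complex measure $\mu$ with $T\phi_{\varepsilon_k} \to \mu$ weak-$*$ and $\abs{\mu}(\bbR^n) \le \liminf_k \norm{T\phi_{\varepsilon_k}}_{L^1(\bbR^n)} \le \norm{T}_{1,1}$, by weak-$*$ lower semicontinuity of the total variation. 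Translation invariance gives $T(f * \phi_\varepsilon) = f * (T\phi_\varepsilon)$ for $f \in L^1(\bbR^n)$; as $\varepsilon = \varepsilon_k \downarrow 0$ the left side tends to $Tf$ in $L^1(\bbR^n)$ (since $f * \phi_{\varepsilon_k} \to f$ in $L^1(\bbR^n)$ and $T$ is continuous), while for $f \in C_0^\infty(\bbR^n)$ the right side tends pointwise to $(f * \mu)(x) = \int_{\bbR^n} f(x-y)\, d\mu(y)$ because $y \mapsto f(x-y)$ lies in $C_\infty(\bbR^n)$; hence $Tf = T_\mu f$ on the dense subspace $C_0^\infty(\bbR^n) \subset L^1(\bbR^n)$, so $T = T_\mu$. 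Together with Young's inequality this forces $\norm{T}_{1,1} = \abs{\mu}(\bbR^n)$.

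For part $(ii)$, I would conjugate by the Fourier transform: $S := \cF T \cF^{-1} \in \cB\big(L^2(\bbR^n)\big)$ with $\norm{S}_{\cB(L^2(\bbR^n))} = \norm{T}_{2,2}$, and since $\cF$ intertwines translations with modulations, $S$ commutes with every multiplication operator $M_{e_y}$, $e_y(\xi) = e^{i(y,\xi)_{\bbR^n}}$, $y \in \bbR^n$. Representing a Schwartz function $g$ as a superposition $g = \int_{\bbR^n} \psi(y)\, e_y\, d^n y$ with $\psi \in \cS(\bbR^n)$ (Fourier inversion) shows that $S$ commutes with $M_g$ for all $g \in \cS(\bbR^n)$, and then, by bounded approximation, for all $g \in L^\infty(\bbR^n)$. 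For $F \subseteq B_n(0,R)$ one has $\chi_F = M_{\chi_F} \chi_{B_n(0,R)}$, hence $S\chi_F = M_{\chi_F}\, S\chi_{B_n(0,R)} = \chi_F\, \big(S\chi_{B_n(0,R)}\big)$, which produces a measurable function $m$ on $\bbR^n$ with $S\chi_F = \chi_F\, m$ for all bounded measurable $F$; since simple functions with bounded support are dense in $L^2(\bbR^n)$, boundedness of $S$ forces $m \in L^\infty(\bbR^n)$ with $\norm{m}_{L^\infty(\bbR^n)} = \norm{S}_{\cB(L^2(\bbR^n))} = \norm{T}_{2,2}$, and $S = M_m$. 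Consequently $T = \cF^{-1} M_m \cF$ is the Fourier multiplier with symbol $m$; setting $u := (m)^\vee \in \cS'(\bbR^n)$ gives $u^\wedge = m$ and $T = T_u$ (the precise identification of $u$ and the norm constant being those dictated by the Fourier convention of \cite{Gr08}), whence $\norm{T}_{2,2} = \norm{u^\wedge}_{L^\infty(\bbR^n)}$.

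The main obstacle is the converse direction in each part, in slightly different guises. In part $(i)$ it is the reconstruction-and-identification step: one must exploit that $T$ is assumed bounded on $L^1(\bbR^n)$ (not merely on $L^2(\bbR^n)$) so that the weak-$*$ limit of $\{T\phi_\varepsilon\}$ is an honest finite measure rather than just a tempered distribution, and one must use translation invariance to pin $T$ down to convolution with that limit on a dense subspace. In part $(ii)$ the crux is the structural fact that a bounded operator on $L^2(\bbR^n)$ commuting with all modulation operators is necessarily a multiplication operator $M_m$ with $m \in L^\infty(\bbR^n)$ --- equivalently, that the commutant of the modulation group consists exactly of the $L^\infty$-Fourier multipliers --- together with the sharp identity $\norm{M_m}_{\cB(L^2(\bbR^n))} = \norm{m}_{L^\infty(\bbR^n)}$ underlying the equality of norms. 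Alternatively, in both parts one may invoke \cite[Theorem~2.5.2]{Gr08} to represent $T$ at the outset as convolution with some $u \in \cS'(\bbR^n)$, after which only the symbol characterizations $T_u \in \cM^{1,1}(\bbR^n) \Leftrightarrow u$ is a finite measure, and $T_u \in \cM^{2,2}(\bbR^n) \Leftrightarrow u^\wedge \in L^\infty(\bbR^n)$, remain to be checked.
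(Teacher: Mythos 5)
This theorem is stated in the appendix as classical background and is not proved in the paper at all; it is quoted directly from \cite[Theorems~2.5.8 and 2.5.10]{Gr08}. Your argument --- the approximate-identity plus weak-$*$ compactness reconstruction of $\mu$ in part $(i)$, and the identification of the commutant of the modulation group with the $L^\infty$ multiplication operators in part $(ii)$ --- is precisely the standard proof given in that reference (and in Stein--Weiss), and it is correct, including your explicit caveat about the $2\pi$-normalization constant entering the identification $u^\wedge = m$.
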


\medskip

\noindent
{\bf Acknowledgments.} We are indebted to Tao Mei for very stimulating discussions. 
 
 
\end{document}